\DeclareMathOperator\prox{P}
\DeclareMathOperator*{\argmin}{arg\,min}
\DeclareMathOperator*{\R}{\mathbb{R}}
\DeclareMathOperator*{\Rn}{\mathbb{R}^n}
\DeclareMathOperator*{\Rm}{\mathbb{R}^m}
\newtheorem{theorem}{Theorem}
\newtheorem{lemma}[theorem]{Lemma}
\newtheorem{assumption}{Assumption}
\title{An Envelope for Davis-Yin Splitting and Strict Saddle Point Avoidance}
\author{Yanli Liu$^*$ \and Wotao Yin\footnote{Department of Mathematics, University of California, Los Angeles, CA 90095.\newline E-mails: yanli / wotaoyin@math.ucla.edu}}
\date{\today}
\begin{document}

\maketitle

\begin{abstract}
    It is known that operator splitting methods based on Forward Backward Splitting (FBS), Douglas-Rachford Splitting (DRS), and Davis-Yin Splitting (DYS) decompose a difficult optimization problems into simpler subproblem under proper convexity and smoothness assumptions. In this paper, we identify an envelope (an objective function) whose gradient descent iteration under a variable metric coincides with DYS iteration. This result generalizes the Moreau envelope for proximal-point iteration and the envelopes for FBS and DRS iterations identified by Patrinos, Stella, and Themelis. 
    
    Based on the new envelope and the Stable-Center Manifold Theorem, we further show that, when FBS or DRS iterations start from random points, they avoid all strict saddle points with probability one. This result extends the similar results by Lee et al. from gradient descent to splitting methods.
\end{abstract}

\section{Introduction}

The most general model considered in this paper minimizes the sum of three functions, where two of them are Lipschitz differentiable and, out of these two, one can involve a composition with a linear operator. The third function can be non-differentiable, and all the three functions can be nonconvex. A mathematical formulation is given in Section \ref{DYS envelope}. The results of this paper, of course, apply to simpler models where any one or two of these three functions vanish. Problems that can be written in our general model are abundant. Examples include texture inpainting \cite{liu2013tensor}, matrix completion \cite{candes2010matrix}, and support vector machine classification \cite{cortes1995support}.

Our model can be solved by the splitting iterative methods based on Douglas-Rachford Splitting (DRS) \cite{lions1979splitting} and Forward-Backward Splitting (FBS) \cite{passty1979ergodic}, as well as their generalization, Davis-Yin Splitting (DYS) \cite{davis2017three}. In these methods, the problem objective is split into different steps, one of each of the objective functions. Their implementations are typically straightforward. By exploiting additional sum and coordinate friendly structures, they give rise to parallel and distributed algorithms that are highly scalable. The details of these methods are reviewed in Section \ref{DYS envelope} below.

These splitting methods are traditionally analyzed assuming that the subdifferentials of the objective functions are maximally monotone. Those of nonconvex functions are generally non-monotone. Therefore, the majority of the existing results apply to only convex objective functions.

Recently, FBS and DRS are found to numerically converge for certain nonconvex problems, for example, FBS for image restoration \cite{stella2017forward}, dictionary learning, and matrix decomposition \cite{themelis2016forward}, and 
DRS for nonconvex feasibility problem \cite{li2016douglas}, matrix completion \cite{artacho2014douglas}, and phase retrieval \cite{chen2016fourier}. Theoretically, their iterates have been shown to converge to stationary points in some nonconvex settings \cite{attouch2013convergence, li2016douglas, themelis2017douglas, guo2017convergence}. In particular, any bounded sequence produced by FBS converges to a stationary point when the objective satisfies the KL property \cite{attouch2013convergence}; By using the Douglas-Rachford Envelope (DRE), DRS iterates are shown to converge to a stationary point when one of the two functions is Lipschitz differentiable, both of them are semi-algebraic and bounded below, and one of them is coercive \cite{li2016douglas,themelis2017douglas}; In \cite{guo2017convergence}, when one function is strongly convex and the other is weakly convex, and their sum is strongly convex, DRS iterates are shown to be Fejer monotone with respect to the set of fixed points of DRS operator, thus convergent. Though unlikely, it is still possible that the limit of a convergent sequence is a saddle point instead of a local minimum (except when all stationary points are local minima, which is the case studied in \cite{guo2017convergence}). This depends on the problem geometry and the selected start point.

Recently, some first-order methods have been shown to avoid so-called strict saddle points, with probability one regarding random initialization \cite{lee2016gradient,lee2017first}. These results make skillful use of the Stable-Center Manifold Theorem \cite{shub2013global}. So far, their results apply to only relatively simple methods such as Gradient Descent, Coordinate Descent, and Proximal Point methods. 
We give an affirmative answer (under smoothness assumptions) that splitting methods also have this property. This result also matches the practical observations made in~\cite{sun2017complete}.

This paper makes the following contribution regarding the convergence and saddle point avoidance of FBS, DRS, and DYS iterations for nonconvex problems. We first generalize the existing Forward-Backward Envelope (FBE) and Douglas-Rachford Envelope (DRE) into a Davis-Yin Envelope (DYE) and establish relationships between the latter envelope and the original optimization objective. Then,  under smoothness conditions, we show that the probability for DRS and FBS iterations with random initializations converge to strict saddle points of their respective DRE and FBE is zero. Finally, by the connection between the envelopes and the original objectives, we extend the above avoidance results to the strict saddle points of the original objectives. That is, when our problem has the strict saddle property,  DRS and FBS with random initialization will almost surely converge to local minimizers. The strict saddle property is satisfied in several applications including, but not limited to, dictionary learning~\cite{sun2017complete}, simple neural networks~\cite{brutzkus2017globally},  phase retrieval~\cite{sun2016geometric}, tensor decomposition~\cite{ge2015escaping}, and low-rank matrix factorization~\cite{bhojanapalli2016global}.

The rest of this paper is organized as follows. In Section~\ref{pre}, we introduce notation and review some useful results. In Section~\ref{DYS envelope}, we define the envelope for DYS and rewrite DYS equivalently as gradient descent of this envelope. In Section~\ref{properties of the envelope}, we establish a strong relationship between the envelope and the objective. Then, in Section~\ref{avoidance}, we analyze the avoidance of strict saddle points of the objective. Finally, we conclude this paper in Section~\ref{conclusion}.

\section{Preliminaries}
\label{pre}
In this section, we review some basic concepts, introduce our notation, and state some known results. For the sake of brevity, we omit proofs and direct references. We refer the reader to textbooks~\cite{rockafellar2009variational,bauschke2011convex}.

We let $\mathbf{0}\in\Rn$ denote the vector zero,  $\langle\cdot,\cdot\rangle$ the inner product, $\|\cdot\|$ the $\ell_2$ norm, and Fix$T$  the set of fixed points of operator $T$.

A function $f:\Rn \rightarrow \mathbb{R}\cup\{\infty\}$ is called $\beta-$weakly convex (or $\beta-$semiconvex) if the function $\tilde{f}:=f+\frac{\beta}{2}\|\cdot\|^2$ is convex. Clearly, $f$ can be nonconvex.

Let $y\xrightarrow{f} x$ denote $y \rightarrow x$ and $f(y) \rightarrow f(x)$. Then the subdifferential of $f$ at $x\in \mathrm{dom}\,f$ can be defined by
\begin{align*}
\partial f(x)\coloneqq&\Big\{v\in \Rn\,\big|\, \exists x^t\xrightarrow{f} x, v^t\rightarrow v, \mathrm{with}\\ &\quad\liminf_{z\rightarrow x^t}\frac{f(z)-f(x^t)-\langle v^t, z-x^t\rangle}{\|z-x^t\|}\geq 0 \,\,\mathrm{for \,\,each}\,\,t\Big\}.
\end{align*}
If $f$ is differentiable at $x$, we have $\partial f(x)=\{\nabla f(x)\}$;
If $f$ is convex, we have
\[
\partial f(x)=\{v\in \Rn\,|\, f(z)\geq f(x)+\langle v, z-x\rangle\,\,\mathrm{for\,\, any}\,\,z\in\Rn\},
\]
which is the classic definition of subdifferential in convex analysis.

A point $x^*$ is a stationary point of a function $f$ if $\mathbf{0}\in\partial f(x^*)$. $x^*$ is a critical point of $f$ if $f$ is differentiable at $x^*$ and $\nabla f(x^*)=\mathbf{0}$.

A point $x^*$ is a \emph{strict saddle point} of $f$ if $f$ is twice differentiable at $x^*$, $x^*$ is a critical point of $f$, and $\lambda_{\min}[\nabla^2 f(x^*)]<0$, where $\lambda_{\min}[\cdot]$ returns the smallest eigenvalue of the input. Local minimizers of a function are always its stationary points, but not strict saddle points. 

For any $\gamma>0$, the proximal mapping of a function $f$ is defined by
\[
\prox_{\gamma f}(x):=\argmin_{y\in\Rn}\{ f(y)+\frac{1}{2\gamma}\|y-x\|^2\},
\]
assuming that the right-hand side exists. 
When $f$ is convex, $\prox_{\gamma f}$ is single-valued and satisfies
$\prox_{\gamma f}(x)=(\mathrm{Id}+\gamma \partial f)^{-1}(x),$
where Id is the identity map.
For any proper, closed, convex function $f$, it Moreau Identity is
\begin{equation}
    \label{MoreauIdentity}
    \mathrm{Id}=\prox_{\gamma f}+\gamma\prox_{\frac{f^*}{\gamma}}\circ\frac{\mathrm{Id}}{\gamma},
\end{equation}
where 
$
f^*(u)\coloneqq\sup_{x\in\Rn}\{\langle u,x\rangle-f(x)\}
$
is the convex conjugate of $f$.

We also need the Inverse Function Theorem: for a $C^1$ mapping $F: \Rn\rightarrow \Rn$, if the Jacobian $J_{F}(x)$ of $F$ at $x\in\Rn$ is invertible, then there exists an inverse function $F^{-1}$ defined in a neighbourhood of $F(x)$ such that $F^{-1}$ is also $C^{1}$ and 
\begin{equation}
J_{F^{-1}}\big(F(x)\big)=\big(J_{F} (x)\big)^{-1}.
\label{inversefunctionthm}
\end{equation}

\section{Envelope for Davis-Yin Splitting}
\label{DYS envelope}

In this section, we will introduce a function, which we call an envelope, such that DYS iteration can be written as the gradient descent of this function under a variable metric.
Since DYS generalizes FBS and DRS, the envelope of DYS is also a generalization of FBE and DRE, the respective envelopes of FBS and DRS, which were introduced in \cite{stella2017proximal,themelis2016forward}. 

\subsection{Review of Davis-Yin splitting}\label{dys_review}
DYS~\cite{davis2017three} can be applied to solve the following problem:
\begin{equation}
\label{objective}
\mathop{\mathrm{minimize}}_{x\in \mathbb{R}^n} \varphi(x) \coloneqq f(x)+g(x)+h(Lx),
\end{equation}
where, for this subsection, $f,g: \Rn\rightarrow\R$ and $h: \Rm\rightarrow\R$ are proper, closed, and convex,
$h$ is also $L_h-$Lipschitz differentiable, and $L:\mathbb{R}^n\rightarrow \mathbb{R}^m$ is a linear operator. 

DYS iteration produces a sequence $(x^k)_{k\ge0}$ according to $z^{k+1} = Tz^k$, where
$$Tz^k:= z^k + \alpha\Big(\prox_{\gamma f}\big(2\prox_{\gamma g}(z^k)-z^k-\gamma L^T\nabla h(Lz^k)\big)-\prox_{\gamma g}(z^k)\Big),$$
where $\gamma$ and $\alpha$ are positive scalars.
We rewrite this operator into successive steps with designated letters as
\begin{align}
q^k&\coloneqq L^T\nabla h(Lz^k),\nonumber\\
r^k&\coloneqq 2\prox_{\gamma g}(z^k)-z^k,\nonumber\\
p^k&\coloneqq \prox_{\gamma f}(r^k-\gamma q^k),\nonumber\\
w^k&=p^k-\prox_{\gamma g}(z^k),\label{w^k}\\
z^{k+1}&=Tz^k=z^k+\alpha w^k.\label{iter}
\end{align}
Since $f,g$ are closed, proper, and convex (in this subsection), $\prox_{\gamma g}$ and $\prox_{\gamma f}$ are well defined and single valued. In \cite{davis2017three}, convergence is established for a range of parameters
\begin{align*}
  \gamma \in \Big(0,\frac{2L_h}{\|L\|^2}\Big)\quad \mathrm{and}\quad \alpha \in \Big(0,2-\frac{\gamma \|L\|^2}{2L_h}\Big).  
\end{align*}
When $h=0$, \eqref{iter} simplifies to Douglas-Rachford Splitting iteration,
\[
z^{k+1}=z^k+\alpha \big(\prox_{\gamma f}(r^k)-\prox_{\gamma g}(z^k)\big).
\]

When $g=0$, $\prox_{\gamma g}$ reduces to Id and thus \eqref{iter} simplifies to
\[
z^{k+1}=z^k+\alpha(\prox_{\gamma f}(z^k-\gamma q^k)-z^k),
\]
which is Forward-Backward Splitting iteration slightly generalized by including the linear operator $L$.

When $f=0$, $\prox_{\gamma f}$ reduces to Id and \eqref{iter} simplifies to Backward-Forward Splitting,
\[
z^{k+1}=z^k+\alpha (\prox_{\gamma g}(z^k)-\gamma q^k-z^k).
\]

When $f=g=0$, \eqref{iter} simplifies to gradient descent iteration
\[
z^{k+1}=z^k-\alpha\gamma q^k.
\]

\subsection{Derivation of envelope}
Now we show that, \eqref{iter} can be written as gradient descent iteration of an envelope function under the following assumption.  
\begin{assumption}
\label{assumption 1}
\hfill
\begin{enumerate}
    \item $g: \Rn\rightarrow {\R}$ is $L_g-$Lipschitz differentiable.
    \item $h: \Rm\rightarrow {\R}$ is $L_h-$Lipschitz differentiable.
    \item $f: \Rn\rightarrow {\R}\cup\{\infty\}$ is proper, lower semicontinuous and $\gamma\in(0,\frac{1}{L_g+L_h\|L\|^2})$. In addition, $f$ is prox-bounded in the sense that $f(\cdot)+\frac{1}{2\gamma}\|\cdot\|^2$ is lower bounded for any  $\gamma\in(0,\frac{1}{L_g+L_h\|L\|^2})$.
\end{enumerate}
\end{assumption}
Compared to the assumption in Section~\ref{dys_review}, a main restriction is that $g$ is Lipschitz differentiable. On the other hand, all $f$, $g$ and $h$ can be nonconvex.

We begin with two technical lemmas regarding the Moreau envelope of weakly convex functions and its twice differentiability.
\begin{lemma} 
\label{Moreau}
Let $\xi$ be proper, closed, $\beta-$weakly convex. Choose $\gamma$ such that $\gamma\in(0, \frac{1}{\beta})$. Let $\xi^{\gamma}(z)=\min_{u\in \mathbb{R}^n}\{\xi(u)+\frac{1}{2\gamma}\|z-u\|^2\}$ be the Moreau envelope of $\xi$. Define $\tilde{\xi}=\xi+\frac{\beta}{2}\|\cdot\|^2$, which is convex. Then, proximal mapping $\prox_{\gamma \xi}(z)$ is single-valued and satisfies
\begin{align*}
\prox_{\gamma \xi}(z)&=\prox_{\frac{\gamma}{1-\gamma \beta}\tilde{\xi}}(\frac{1}{1-\gamma \beta}z),\\
\nabla \xi^{\gamma}(z)&=\gamma^{-1}\big(z-\prox_{\gamma \xi}(z)\big).
\end{align*}
Furthermore, $\prox_{\gamma \xi}(z)$ is $\frac{1}{1-\gamma \beta}-$Lipschitz continuous.
\end{lemma}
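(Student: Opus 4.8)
The plan is to reduce all three claims to the well-understood convex case by passing to the convex function $\tilde{\xi} = \xi + \frac{\beta}{2}\|\cdot\|^2$. The engine is a completion of the square: writing $\xi(u) = \tilde{\xi}(u) - \frac{\beta}{2}\|u\|^2$, the objective defining $\prox_{\gamma\xi}(z)$ rearranges to
\[
\xi(u) + \frac{1}{2\gamma}\|z - u\|^2 = \tilde{\xi}(u) + \frac{1 - \gamma\beta}{2\gamma}\Big\|u - \frac{z}{1-\gamma\beta}\Big\|^2 + C(z),
\]
where $C(z)$ gathers the terms independent of $u$. Since $\gamma \in (0,\tfrac{1}{\beta})$ gives $1 - \gamma\beta > 0$, setting $\gamma' := \frac{\gamma}{1-\gamma\beta} > 0$ makes the $u$-minimization exactly the one defining $\prox_{\gamma'\tilde{\xi}}\big(\tfrac{z}{1-\gamma\beta}\big)$. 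As $\tilde{\xi}$ is proper, closed and convex, the minimand on the right is a closed proper strongly convex function, so its minimizer exists and is unique; this yields the first identity and simultaneously that $\prox_{\gamma\xi}$ is single-valued.

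For the gradient formula I would track the $z$-dependent constant in the same computation; a direct calculation gives $C(z) = -\frac{\beta}{2(1-\gamma\beta)}\|z\|^2$, hence
\[
\xi^{\gamma}(z) = \tilde{\xi}^{\gamma'}\big(\tfrac{z}{1-\gamma\beta}\big) - \frac{\beta}{2(1-\gamma\beta)}\|z\|^2,
\]
where $\tilde{\xi}^{\gamma'}$ denotes the Moreau envelope of the convex function $\tilde{\xi}$. By the classical smoothness of the Moreau envelope in the convex setting, $\tilde{\xi}^{\gamma'}$ is $C^1$ with $\nabla\tilde{\xi}^{\gamma'}(v) = \tfrac{1}{\gamma'}\big(v - \prox_{\gamma'\tilde{\xi}}(v)\big)$. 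Applying the chain rule to the last display, substituting $\prox_{\gamma'\tilde{\xi}}\big(\tfrac{z}{1-\gamma\beta}\big) = \prox_{\gamma\xi}(z)$ from the first identity, and simplifying (the coefficients collapse because $\tfrac{1}{\gamma'} = \tfrac{1-\gamma\beta}{\gamma}$) produces $\nabla\xi^{\gamma}(z) = \gamma^{-1}\big(z - \prox_{\gamma\xi}(z)\big)$.

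For the Lipschitz bound I would use that $\prox_{\gamma'\tilde{\xi}}$ is nonexpansive (indeed firmly nonexpansive) for the convex $\tilde{\xi}$; composing with the linear map $z \mapsto \tfrac{z}{1-\gamma\beta}$, of operator norm $\tfrac{1}{1-\gamma\beta}$, and invoking the first identity gives that $\prox_{\gamma\xi}$ is $\tfrac{1}{1-\gamma\beta}$-Lipschitz. None of the steps is deep; the only points requiring care are the bookkeeping of constants in the completion of the square — in particular the $z$-dependent term $C(z)$, which is exactly what makes the envelope identity more than a plain reparametrization — and noting that properness and closedness of $\xi$ pass to $\tilde{\xi}$ (since $\frac{\beta}{2}\|\cdot\|^2$ is finite and continuous) so that the convex Moreau-envelope theory applies. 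The degenerate case $\beta \le 0$ is either vacuous, because then the interval $(0,\tfrac{1}{\beta})$ is empty, or it simply recovers the standard convex facts with $\tilde{\xi} = \xi$.
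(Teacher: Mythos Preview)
Your proposal is correct and follows essentially the same approach as the paper's own proof: complete the square to rewrite $\xi^{\gamma}(z)$ in terms of the Moreau envelope of the convex function $\tilde{\xi}$ with parameter $\gamma/(1-\gamma\beta)$ evaluated at $z/(1-\gamma\beta)$, read off the prox identity and single-valuedness, deduce the Lipschitz bound from nonexpansiveness of the convex prox composed with the scaling, and obtain the gradient formula by the chain rule applied to the classical $\nabla\tilde{\xi}^{\gamma'}(v)=\frac{1}{\gamma'}(v-\prox_{\gamma'\tilde{\xi}}(v))$. The only differences are cosmetic (ordering of the three conclusions and your brief aside on $\beta\le 0$).
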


\begin{proof}
We have
\begin{align*}
    \xi^{\gamma}(z)
    &=\min_{u\in\R} \{\xi(u)+\frac{\beta}{2}\|u\|^2+\frac{1}{2\gamma}\|u-z\|^2-\frac{\beta}{2}\|u\|^2\}\\
    &=\min_{u\in\R}\Big\{\tilde{\xi}(u)+\frac{1-\gamma \beta}{2\gamma}\|u-\frac{1}{1-\gamma \beta}z\|^2\Big\}-\frac{\beta}{2-2\gamma \beta}\|z\|^2
\end{align*}
where the second equality follows from the definition of $\tilde{\xi}$.

As a result, for $\gamma\in(0,\frac{1}{\beta})$, $\prox_{\gamma \xi}$ is single-valued and 
\begin{align*}
    \prox_{\gamma \xi}(z)=\prox_{\frac{\gamma}{1-\gamma \beta}\tilde{\xi}}(\frac{1}{1-\gamma \beta}z).
\end{align*}
Since $\prox_{\frac{\gamma}{1-\gamma \beta}\tilde{\xi}}(z)$ is $1-$Lipschitz continuous, we know that $\prox_{\gamma \xi}(z)$ is $\frac{1}{1-\gamma \beta}-$Lipschitz continuous.

Finally, since $\tilde{\xi}$ is convex,  \cite[Prop.12.29]{bauschke2011convex} tells us that $\xi^{\gamma}$ is differentiable and
\begin{align*}
\nabla \xi^{\gamma}(z)
&=\frac{1}{1-\gamma \beta}\nabla \tilde{\xi}^{\frac{\gamma}{1-\gamma \beta}}(\frac{1}{1-\gamma \beta}z)-\frac{\beta}{1-\gamma \beta}z\\
&=\frac{1}{1-\gamma \beta}\frac{1-\gamma \beta }{\gamma}\big(\frac{1}{1-\gamma \beta}z-\prox_{\frac{\gamma}{1-\gamma \beta}\tilde{\xi}}(\frac{1}{1-\gamma \beta}z)\big)-\frac{\beta}{1-\gamma \beta}z\\
&=\frac{1}{\gamma}\big(z-\prox_{\gamma \xi}(z)\big).
\end{align*}
\end{proof}

\begin{lemma}
\label{g related differentiablility}
In addition to Assumption \ref{assumption 1}, if $g$ is twice differentiable at $\prox_{\gamma g}(z^0)$, then 
$\prox_{\gamma g}$ has a Jacobian at $z^0$, $g^{\gamma}$ is twice differentiable at $z^0$ with the Hessian
\[
\nabla^2 g^{\gamma}(z^0)=\frac{1}{\gamma}\bigg(I-\Big(I+\gamma \nabla^2g\big(\prox_{\gamma g}(z^0)\big)\Big)^{-1}\bigg).
\]
In addition, the mapping 
\begin{equation}
A(z)\coloneqq I-2\gamma \nabla^2g^{\gamma}(z)-\gamma L^T\nabla^2h\big((L\prox_{\gamma g}(z)\big)L\big(I-\gamma \nabla^2 g^{\gamma}(z)\big)
\label{A}
\end{equation} 
is invertible.
\end{lemma}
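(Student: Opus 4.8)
The plan is to treat the two assertions in turn, the Hessian formula first and then the invertibility of $A(z)$, the latter following from the former by elementary manipulations. For the first part, note that since $\nabla g$ is $L_g$-Lipschitz, $g$ is $L_g$-weakly convex, and since $\gamma<\frac{1}{L_g+L_h\|L\|^2}\le\frac1{L_g}$, Lemma~\ref{Moreau} applies with $\beta=L_g$: it yields that $\prox_{\gamma g}$ is single-valued and $\frac1{1-\gamma L_g}$-Lipschitz, and that $\nabla g^{\gamma}(z)=\gamma^{-1}\bigl(z-\prox_{\gamma g}(z)\bigr)$ everywhere. Because $g$ is differentiable, the objective $g(u)+\frac1{2\gamma}\|u-z\|^2$ defining $\prox_{\gamma g}(z)$ is strongly convex (as $\gamma<1/L_g$), so its unique minimizer is characterized by the first-order condition; that is, $p=\prox_{\gamma g}(z)$ iff $z=p+\gamma\nabla g(p)$. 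In other words $\prox_{\gamma g}$ is a global inverse of the continuous map $F:=\mathrm{Id}+\gamma\nabla g$. Writing $\bar u:=\prox_{\gamma g}(z^0)$, twice differentiability of $g$ at $\bar u$ makes $F$ differentiable at $\bar u$ with $J_F(\bar u)=I+\gamma\nabla^2 g(\bar u)$, which is invertible because $\|\gamma\nabla^2 g(\bar u)\|\le\gamma L_g<1$ (Neumann series).

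The crux is then a pointwise inverse-function argument. For $z$ near $z^0$ put $u:=\prox_{\gamma g}(z)$; then $z-z^0=F(u)-F(\bar u)=J_F(\bar u)(u-\bar u)+o(\|u-\bar u\|)$, and since $\prox_{\gamma g}$ is Lipschitz we have $\|u-\bar u\|=O(\|z-z^0\|)$, so the remainder is $o(\|z-z^0\|)$. Applying $J_F(\bar u)^{-1}$ gives
\[
\prox_{\gamma g}(z)-\prox_{\gamma g}(z^0)=\bigl(I+\gamma\nabla^2 g(\bar u)\bigr)^{-1}(z-z^0)+o(\|z-z^0\|),
\]
which is exactly differentiability of $\prox_{\gamma g}$ at $z^0$ with the claimed Jacobian; differentiating $\nabla g^{\gamma}(z)=\gamma^{-1}\bigl(z-\prox_{\gamma g}(z)\bigr)$ at $z^0$ then produces the stated Hessian formula. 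I expect this to be the one delicate point: the Inverse Function Theorem quoted in Section~\ref{pre} presumes a $C^1$ map, whereas $F$ is only known to be differentiable at the single point $\bar u$, so it cannot be invoked verbatim; the Lipschitz estimate on $\prox_{\gamma g}$ supplied by Lemma~\ref{Moreau} is precisely what controls the error term and makes the direct argument go through.

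For the second assertion, substitute the Hessian formula into \eqref{A}. With $P:=\bigl(I+\gamma\nabla^2 g(\prox_{\gamma g}(z))\bigr)^{-1}$ we have $\gamma\nabla^2 g^{\gamma}(z)=I-P$, hence $I-2\gamma\nabla^2 g^{\gamma}(z)=2P-I$ and $I-\gamma\nabla^2 g^{\gamma}(z)=P$, so that
\[
A(z)=(2P-I)-\gamma L^{T}\nabla^2 h\bigl(L\prox_{\gamma g}(z)\bigr)L\,P=\Bigl(I-\gamma\nabla^2 g\bigl(\prox_{\gamma g}(z)\bigr)-\gamma L^{T}\nabla^2 h\bigl(L\prox_{\gamma g}(z)\bigr)L\Bigr)P,
\]
where the last equality uses $(2P-I)P^{-1}=2I-P^{-1}=I-\gamma\nabla^2 g(\prox_{\gamma g}(z))$. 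Since $P$ is invertible, it suffices to show $M:=I-\gamma\nabla^2 g(\prox_{\gamma g}(z))-\gamma L^{T}\nabla^2 h(L\prox_{\gamma g}(z))L$ is invertible. Lipschitz differentiability of $g$ and $h$ gives $\|\nabla^2 g(\cdot)\|\le L_g$ and $\|\nabla^2 h(\cdot)\|\le L_h$ wherever these Hessians exist, hence $\|\gamma\nabla^2 g(\prox_{\gamma g}(z))+\gamma L^{T}\nabla^2 h(L\prox_{\gamma g}(z))L\|\le\gamma\bigl(L_g+L_h\|L\|^2\bigr)<1$ by Assumption~\ref{assumption 1}; so $M$ is an identity-plus-small-perturbation and is invertible, whence $A(z)=MP$ is invertible. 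Apart from the pointwise inverse-function step, everything here is routine bookkeeping with the Hessian formula and the operator-norm bounds coming from Lipschitz differentiability.
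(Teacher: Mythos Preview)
Your argument is correct. Both parts work as written; the pointwise inverse-function step is valid because the Lipschitz bound on $\prox_{\gamma g}$ (from Lemma~\ref{Moreau}) converts $o(\|u-\bar u\|)$ to $o(\|z-z^0\|)$, exactly as you say, and your factorization $A(z)=MP$ with $M=I-\gamma\nabla^2 g(\prox_{\gamma g}(z))-\gamma L^T\nabla^2 h(L\prox_{\gamma g}(z))L$ and $P=(I+\gamma\nabla^2 g(\prox_{\gamma g}(z)))^{-1}$ is easily checked via $(2P-I)P^{-1}=2I-P^{-1}$.

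The paper's own proof differs in both parts. For the Jacobian formula it simply invokes the Inverse Function Theorem quoted in Section~\ref{pre}; you correctly note that this version is stated for $C^1$ maps, so your direct expansion is actually more careful under the hypotheses as literally written (only pointwise second-order differentiability of $g$). For the invertibility of $A(z)$, the paper takes a more involved route: it writes $A(z^0)=A_1-\gamma A_2$ with $A_1=2P-I$ and $A_2=L^T\nabla^2 h(L\prox_{\gamma g}(z^0))LP$, factors $\det(A(z^0))=\det(I-\gamma A_2A_1^{-1})\det(A_1)$, argues that $A_2A_1^{-1}=L^T\nabla^2 h\,L\,(2I-C)^{-1}$ has real eigenvalues (symmetric times symmetric positive definite), and then bounds $\lambda_1(A_2A_1^{-1})\le\|L\|^2 L_h/(1-\gamma L_g)$ using an eigenvalue inequality from \cite{zhang2006eigenvalue}. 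Your route is shorter and more elementary: the single operator-norm estimate $\|\gamma\nabla^2 g+\gamma L^T\nabla^2 h\,L\|\le\gamma(L_g+L_h\|L\|^2)<1$ already gives invertibility of $M$ by a Neumann-series argument, and no external eigenvalue lemma is needed. The paper's decomposition, on the other hand, isolates the reflection piece $A_1=2P-I$ and the forward-Hessian piece $A_2$, which is the form reused later (e.g.\ in Lemma~\ref{local differomorphism}) when eigenvalue information about $J_T$, rather than mere invertibility, is required.
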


\begin{proof}
Since $\gamma\in(0,\frac{1}{L_g})$, $\prox_{\gamma g}$ is single-valued and 
\begin{equation}
\label{X}
\prox_{\gamma g}(z^0)=(\mathrm{Id}+\gamma \nabla g)^{-1}z^0,
\end{equation}
where $(\mathrm{Id}+\gamma\nabla g)^{-1}$ is the inverse mapping of $\mathrm{Id}+\gamma \nabla g$. Since $\nabla^2 g(\prox_{\gamma g}\big(z^0)\big)$ is symmetric and its eigenvalues are bounded by $L_g$, we know that $I+\gamma \nabla^2 g\big(\prox_{\gamma g}(z^0)\big)$ is invertible, which is the  Jacobian of $\mathrm{Id}+\gamma \nabla g$ at $\prox_{\gamma g}(z^0)$. 

Applying the Inverse Function Theorem to \eqref{X} by setting $F$ as $\prox_{\gamma g}$ and $z^0$ as $p$ in \eqref{inversefunctionthm},  we have 
\[
J_{\prox_{\gamma g}}(z^0)=\Big(I+\gamma \nabla^2g\big(\prox_{\gamma g}(z^0)\big)\Big)^{-1},
\] 
Hence, Lemma \ref{Moreau} yields
\[
\nabla^2 g^{\gamma}(z^0)=\frac{1}{\gamma}\bigg(I-\Big(I+\gamma \nabla^2g\big(\prox_{\gamma g}(z^0)\big)\Big)^{-1}\bigg).
\]
According to \eqref{A}, 
\begin{align}
A(z^0)=A_1-\gamma A_2.\label{A2}
\end{align}
where 
\begin{align*}
    A_1&=2\Big(I+\gamma \nabla^2 g\big(\prox_{\gamma g}(z^0)\big)\Big)^{-1}-I,\\
    A_2&=L^T\nabla^2h\big(L\prox_{\gamma g}(z^0)\big)L\Big(I+\gamma \nabla^2 g\big(\prox_{\gamma g}(z^0)\big)\Big)^{-1}.\\
\end{align*}
Since $\gamma\in(0,\frac{1}{L_g})$, $A_1$ is invertible, as a result,
\[
\mathrm{det}\big(A(z^0)\big)=\mathrm{det}(A_1-\gamma A_2)=\mathrm{det}(I-\gamma A_2A_1^{-1})\mathrm{det}(A_1)=\prod_{i=1}^{n}(1-\gamma \lambda_i\big(A_2A_1^{-1})\big)\mathrm{det}(A_1),
\]
where $\lambda_i(A_2A_1^{-1}), i=1,...,n$ are the eigenvalues of $A_2A_1^{-1}$.

Let us set 
\[
C=I+\gamma\nabla^2g\big(\prox_{\gamma g}(z^0)\big)=C^T\succ 0,
\]
and rewrite $A_2A_1^{-1}$ as
\[
A_2A_1^{-1}=L^T\nabla^2 h\big(\prox_{\gamma g}(z^0)\big)LC^{-1}(2C^{-1}-I)^{-1}=L^T\nabla^2 h\big(\prox_{\gamma g}(z^0)\big)L(2I-C)^{-1}.
\]

Note that $L^T\nabla^2 h\big(\prox_{\gamma g}(z^0)\big)L$ is symmetric and $(2I-C)^{-1}$ is symmetric, positive definite. Therefore, $\lambda_i(A_2A_1^{-1})\in \mathbb{R}$, and we can set $\lambda_1(A_2A_1^{-1})\geq \lambda_2(A_2A_1^{-1}) ... \geq \lambda_n(A_2A_1^{-1})$.

In order to show $\mathrm{det}\big(A(z^0)\big)\neq 0$, it suffices to show that $1-\gamma \lambda_1>0$ when $\gamma\in(0,\frac{1}{L_g+L_h\|L\|_2^2})$. 

We have 
\begin{align*}
\lambda_1(A_2A_1^{-1})&\overset{(\mathrm{a})}{\leq}\lambda_1\Big(L^T\nabla^2h\big(\prox_{\gamma g}(z^0)\big)L\Big)\cdot\lambda_1\big((2I-C)^{-1}\big)\\
&\leq \lambda_1\Big(L^T\nabla^2h\big(\prox_{\gamma g}(z^0)\big)L\Big)\cdot \frac{1}{2-(1+\gamma L_g)}\\
&=\|(L^T\nabla^2h\big(\prox_{\gamma g}(z^0)\big)L\|_2\cdot \frac{1}{1-\gamma L_g}\\
&\overset{(\mathrm{b})}{\leq}\|L\|_2^2\|\nabla^2h\big(\prox_{\gamma g}(z^0)\big)\|_2\frac{1}{1-\gamma L_g}\\
&\leq\|L\|_2^2L_h\frac{1}{1-\gamma L_g},
\end{align*}
where (a) is by \cite[Corollary 11]{zhang2006eigenvalue}, and (b) is by Cauchy-Schwartz. Since $\gamma\in(0,\frac{1}{L_g+L_h\|L\|^2})$, we have \[
1-\gamma\lambda_1(A_2A_1^{-1})\geq 1-\|L\|_2^2L_h\frac{\gamma}{1-\gamma L_g}>0.
\]
Therefore, $\mathrm{det}\big(A(z^0)\big)\neq 0$.
\end{proof}
We can now write DYS iteration \eqref{iter} as gradient descent of an envelope under the following additional assumption.
\begin{assumption}
\label{assumption 2}
\hfill
\begin{enumerate}
    \item $f$ is $\beta_f-$weakly convex and $\gamma\in(0,\frac{1}{\beta_f})$.
    \item $g,h$ are twice continuously differentiable.
\end{enumerate}
\end{assumption}
\begin{theorem} 
\label{gradient descent}
Under Assumptions \ref{assumption 1} and \ref{assumption 2}, DYS iteration \eqref{iter} can be written equivalently as
\begin{equation}
    z^{k+1}=T(z^k)=z^k-\alpha\gamma A^{-1}(z^k)\nabla \varphi^{\gamma}(z^k),\label{iteration as gradient descent}
\end{equation}
where the metric and envelope are, respectively,
\begin{align}
A(z) &:=  I-2\gamma \nabla^2g^{\gamma}(z)-\gamma L^T\nabla^2h\big(L\prox_{\gamma g}(z)\big)L\big(I-\gamma \nabla ^2g^{\gamma}(z)\big), \nonumber\\
\varphi^{\gamma}(z)& :=  g^{\gamma}(z)-\gamma \|\nabla g^{\gamma}(z)\|^2-\gamma \langle q(z), \nabla g^{\gamma}(z)\rangle +h\big(L\prox_{\gamma g}(z)\big) \nonumber\\
&\quad -\frac{\gamma}{2}\|q(z)\|^2 +f^{\gamma}\Big(z-2\gamma \nabla g^{\gamma}(z)-\gamma q(z)\Big). \label{def1} 
\end{align}
\end{theorem}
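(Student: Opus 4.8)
The plan is as follows. Since \eqref{iter} reads $z^{k+1}=z^k+\alpha w^k$ with $w^k=p^k-\prox_{\gamma g}(z^k)$, and since $A(z^k)$ is invertible by Lemma~\ref{g related differentiablility}, it is enough to prove the single identity $A(z^k)\,w^k=-\gamma\,\nabla\varphi^{\gamma}(z^k)$, which rearranges to \eqref{iteration as gradient descent}. The whole argument rests on Lemma~\ref{Moreau}: it applies to $g$ (which is $L_g$-weakly convex, with $\gamma<1/L_g$) and to $f$ (which is $\beta_f$-weakly convex by Assumption~\ref{assumption 2}, with $\gamma<1/\beta_f$), so $\prox_{\gamma g}$ and $\prox_{\gamma f}$ are single-valued and
\[
\nabla g^{\gamma}(z)=\tfrac1\gamma\big(z-\prox_{\gamma g}(z)\big),\qquad \nabla f^{\gamma}(x)=\tfrac1\gamma\big(x-\prox_{\gamma f}(x)\big)
\]
for all $z,x$. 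Throughout I write $q(z):=L^T\nabla h\big(L\prox_{\gamma g}(z)\big)$ (so $q^k=q(z^k)$, consistently with \eqref{A}) and $s(z):=z-2\gamma\nabla g^{\gamma}(z)-\gamma q(z)$; note that $s(z)$ is exactly the argument of the inner $\prox_{\gamma f}$ in DYS and of $f^{\gamma}$ in \eqref{def1}.

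\emph{Step 1: rewrite $w^k$ via envelope gradients.} From $\prox_{\gamma g}(z)=z-\gamma\nabla g^{\gamma}(z)$ one gets $r^k-\gamma q^k=2\prox_{\gamma g}(z^k)-z^k-\gamma q(z^k)=s(z^k)$, so $p^k=\prox_{\gamma f}\big(s(z^k)\big)=s(z^k)-\gamma\nabla f^{\gamma}\big(s(z^k)\big)$. Subtracting $\prox_{\gamma g}(z^k)=z^k-\gamma\nabla g^{\gamma}(z^k)$ and using $s(z^k)-z^k=-2\gamma\nabla g^{\gamma}(z^k)-\gamma q(z^k)$ gives
\[
w^k=-\gamma\Big(\nabla g^{\gamma}(z^k)+q(z^k)+\nabla f^{\gamma}\big(s(z^k)\big)\Big).
\]
Hence it remains to prove the pointwise identity $\nabla\varphi^{\gamma}(z)=A(z)\big(\nabla g^{\gamma}(z)+q(z)+\nabla f^{\gamma}(s(z))\big)$.

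\emph{Step 2: differentiate the envelope.} By Assumptions~\ref{assumption 1} and~\ref{assumption 2}, $g$ and $h$ are $C^2$, so $\nabla^2 g^{\gamma}(z)$ exists and, by differentiating the first-order identity of Lemma~\ref{Moreau}, $J_{\prox_{\gamma g}}(z)=I-\gamma\nabla^2 g^{\gamma}(z)$ (this also follows from Lemma~\ref{g related differentiablility}); moreover $f^{\gamma}$ is $C^1$, so $\varphi^{\gamma}$, $q$ and $s$ are $C^1$. I would then differentiate \eqref{def1} term by term. The chain rule gives $\nabla_z\, h\big(L\prox_{\gamma g}(z)\big)=\big(I-\gamma\nabla^2 g^{\gamma}(z)\big)q(z)$; the product rule handles $\|\nabla g^{\gamma}(z)\|^2$, $\langle q(z),\nabla g^{\gamma}(z)\rangle$ and $\|q(z)\|^2$; and the one structural fact that makes everything close is that the Jacobian of $z\mapsto s(z)$ is precisely the matrix $A(z)$ of \eqref{A}, so the summand $f^{\gamma}(s(z))$ contributes $A(z)\,\nabla f^{\gamma}(s(z))$. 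Collecting all contributions, the coefficient of $\nabla g^{\gamma}(z)$ and the coefficient of $q(z)$ each also reduce to $I-2\gamma\nabla^2 g^{\gamma}(z)-\gamma J_q(z)=A(z)$, where $J_q(z)=L^T\nabla^2 h\big(L\prox_{\gamma g}(z)\big)L\big(I-\gamma\nabla^2 g^{\gamma}(z)\big)$. This yields $\nabla\varphi^{\gamma}(z)=A(z)\big(\nabla g^{\gamma}(z)+q(z)+\nabla f^{\gamma}(s(z))\big)$; combined with Step~1 and the invertibility of $A(z^k)$ from Lemma~\ref{g related differentiablility}, it establishes \eqref{iteration as gradient descent}.

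The only genuinely delicate part is the bookkeeping in Step~2: one must check that the $\nabla^2 g^{\gamma}$ terms produced by the five summands of $\varphi^{\gamma}$, together with the $\nabla^2 h$ terms hidden inside $J_q$ and inside $\nabla_z\, h(L\prox_{\gamma g}(z))$, recombine \emph{exactly} into the single operator $A(z)$ with nothing left over. Spotting in advance that $A(z)$ is the Jacobian of $z\mapsto z-2\gamma\nabla g^{\gamma}(z)-\gamma q(z)$ is what keeps this computation manageable, and one should keep track of Jacobian/transpose conventions since $A(z)$ need not be symmetric. Everything else --- single-valuedness of the proximal maps, existence of the required gradients and Hessians, and invertibility of $A$ --- is already provided by Lemmas~\ref{Moreau} and~\ref{g related differentiablility} under Assumptions~\ref{assumption 1} and~\ref{assumption 2}.
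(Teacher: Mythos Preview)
Your proposal is correct and follows essentially the same approach as the paper's own proof: both rewrite $w^k$ via Lemma~\ref{Moreau} as $-\gamma\big(\nabla g^{\gamma}(z^k)+q(z^k)+\nabla f^{\gamma}(s(z^k))\big)$, then verify $A(z^k)w^k=-\gamma\nabla\varphi^{\gamma}(z^k)$ by differentiating \eqref{def1} term by term and invoking the invertibility of $A(z^k)$ from Lemma~\ref{g related differentiablility}. Your explicit identification of $A(z)$ as the Jacobian of $s(z)=z-2\gamma\nabla g^{\gamma}(z)-\gamma q(z)$ is exactly the observation the paper uses when writing $\nabla_z f^{\gamma}(s(z))=A(z)\nabla f^{\gamma}(s(z))$, and your caveat about transpose conventions is well placed, since strictly speaking the chain rule produces $A(z)^T$ in each coefficient---a harmless convention issue shared with the paper.
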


\begin{proof}
In view of Lemma \ref{Moreau} and \eqref{w^k}, we have 
\begin{align}
w^k=p(z^k)-\prox_{\gamma g}(z^k)=\prox_{\gamma f}(r^k-\gamma q^k)-\prox_{\gamma g}(z^k),
\label{calculation}
\end{align}
where 
\begin{align*}
    \prox_{\gamma f}(r^k-\gamma q^k)&=r^k-\gamma q^k-\gamma \nabla f^{\gamma}(r^k-\gamma q^k).\\
    \prox_{\gamma g}(z^k)&=z^k-\gamma \nabla g^{\gamma}(z^k),\\
    r^k&=2\prox_{\gamma g}(z^k)-z^k=z^k-2\gamma \nabla g^{\gamma}(z^k),\\
    q^k&=q(z^k)=L^T\nabla h\Big(L\big(z^k-\gamma \nabla g^{\gamma}(z^k)\big)\Big).\\
\end{align*}
By substitution, 
\begin{align*}
    w^k&=-\gamma \nabla g^{\gamma}(z^k)-\gamma q(z^k) -\gamma \nabla f^{\gamma}\Big(z^k-2\gamma \nabla g^{\gamma}(z^k)-\gamma q(z^k)\Big).
\end{align*}
Let $\nabla_{z}$ denote taking gradient to $z$; then
\begin{align*}
    \nabla_z f^{\gamma}\Big(z-2\gamma \nabla g^{\gamma}(z)-\gamma q(z)\Big) 
    =A(z)\nabla f^{\gamma}\Big(z-2\gamma \nabla g^{\gamma}(z)-\gamma q(z)\Big),
\end{align*}
where $A(z)$ is given in \eqref{A}.
After some computation, we can verify that
\begin{align*}
    A(z^k)w^k=&-\gamma\big(\nabla_zg^{\gamma}(z^k)-\gamma \nabla_z\|\nabla g^{\gamma}(z^k)\|^2\big)-\gamma \Big(-\gamma\nabla_z\big(\langle q(z), \nabla g^{\gamma}(z^k)\rangle\big)\Big)\\
     &-\gamma \nabla_z h\big(L\prox_{\gamma g}(z)\big)-\gamma\big(-\frac{\gamma}{2}\nabla_z \|q(z)\|^2\big) -\gamma \nabla_z f^{\gamma}\Big(z^k-2\gamma \nabla g^{\gamma}(z^k)-\gamma q(z)\Big)\\
     &=-\gamma \nabla \varphi^{\gamma}(z^k).
\end{align*}
Since $A(z^k)$ is invertible, we can rewrite DYS iteration \eqref{iter} as \eqref{iteration as gradient descent}.
\end{proof}

\section{Properties of envelope}
\label{properties of the envelope}
In this section, we show that the global minimizers, local minimizers,  critical(stationary) points, and strict saddle points of the envelope $\varphi^{\gamma}$ defined in \eqref{def1} correspond \emph{one on one} to those of the objective function $\varphi$ in \eqref{objective}. 

First, we show lower and upper bounds of the envelope, which generalize \cite[Prop. 2.3]{themelis2016forward}, \cite[Prop. 4.3]{stella2017proximal}, and \cite[Prop. 1]{patrinos2014douglas}.
\begin{lemma}
\label{two inequalities}
Under Assumption \ref{assumption 1}, 
the following three inequalities hold for any $z\in\Rn$:
\begin{align}
    \varphi^{\gamma}(z)\leq & \, \varphi\big({\prox_{\gamma g}}(z)\big),  \label{inequ1}\\
    \varphi^{\gamma}(z) \geq &  \,\varphi\big(p(z)\big)
    +C_1(\gamma)\|p(z)-\prox_{\gamma g}(z)\|^2,
    \label{inequ2}\\
    \varphi^{\gamma}(z) \leq & \,\varphi\big(p(z)\big)
    +C_2(\gamma)\|p(z)-\prox_{\gamma g}(z)\|^2, \label{inequ3}
\end{align}
where $\varphi^{\gamma}(z)$ is defined in \eqref{def1}, \[C_1(\gamma)\coloneqq\frac{1-\gamma L_h\|L\|^2-\gamma L_g}{2\gamma}>0,\] 
\[C_2(\gamma)\coloneqq\frac{1+\gamma L_h\|L\|^2+\gamma L_g}{2\gamma}>0,\] 
and $p(z)$ is any element of\, $\prox_{\gamma f}\big(2\prox_{\gamma g}(z)-z-\gamma q(z)\big)$.
\end{lemma}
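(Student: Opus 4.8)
The plan is to expand the three functions $\varphi^\gamma(z)$, $\varphi(\prox_{\gamma g}(z))$, and $\varphi(p(z))$ using the substitutions already recorded in the proof of Theorem~\ref{gradient descent} and then estimate the differences via the descent/ascent inequalities for the Lipschitz-differentiable pieces $g$ and $h$ together with the defining inequality of the proximal map of $f$. Throughout I write $\bar{z}:=\prox_{\gamma g}(z)=z-\gamma\nabla g^\gamma(z)$, $r:=2\bar{z}-z=z-2\gamma\nabla g^\gamma(z)$, $q=q(z)=L^T\nabla h(L\bar{z})$, and $p=p(z)=\prox_{\gamma f}(r-\gamma q)$. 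The key identity is that the argument of $f^\gamma$ in \eqref{def1} equals $r-\gamma q$, so $f^\gamma(r-\gamma q)=f(p)+\frac{1}{2\gamma}\|p-(r-\gamma q)\|^2$ and $p-(r-\gamma q)=-\gamma\nabla f^\gamma(r-\gamma q)$.

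First I would prove \eqref{inequ1}. The Moreau envelope satisfies $g^\gamma(z)=g(\bar z)+\frac{1}{2\gamma}\|\bar z-z\|^2=g(\bar z)+\frac{\gamma}{2}\|\nabla g^\gamma(z)\|^2$, and likewise $f^\gamma(r-\gamma q)\le f(\bar z)+\frac{1}{2\gamma}\|\bar z-(r-\gamma q)\|^2$ by definition of $\prox_{\gamma f}$ as a minimum (evaluating the objective at $\bar z$ instead of at $p$). Substituting both into \eqref{def1}, collecting the $h(L\bar z)$ term, and simplifying the quadratic terms in $\nabla g^\gamma(z)$ and $q$, the cross terms should telescope so that $\varphi^\gamma(z)-\varphi(\bar z)\le 0$; this is a direct computation using $\bar z-(r-\gamma q)=\gamma\nabla g^\gamma(z)+\gamma q$.

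For \eqref{inequ2} and \eqref{inequ3} I would instead compare $\varphi^\gamma(z)$ with $\varphi(p)=f(p)+g(p)+h(Lp)$. Use $f^\gamma(r-\gamma q)=f(p)+\frac{1}{2\gamma}\|p-r+\gamma q\|^2$ exactly, so the $f$-part contributes $f(p)$ plus an exact quadratic. For the $g$-part, $L_g$-Lipschitz differentiability gives the two-sided bound $g(p)\le g(\bar z)+\langle\nabla g(\bar z),p-\bar z\rangle+\frac{L_g}{2}\|p-\bar z\|^2$ and the matching lower bound with $-\frac{L_g}{2}$; since $\gamma\nabla g^\gamma(z)=z-\bar z$ and $\nabla g(\bar z)$ relates to $\nabla g^\gamma(z)$ via $\prox$, these turn into terms in $\|p-\bar z\|^2$ with the right signs. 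Similarly for $h$, $L_h$-Lipschitz differentiability of $h$ gives $|h(Lp)-h(L\bar z)-\langle\nabla h(L\bar z),L(p-\bar z)\rangle|\le\frac{L_h}{2}\|L(p-\bar z)\|^2\le\frac{L_h\|L\|^2}{2}\|p-\bar z\|^2$, and $\langle\nabla h(L\bar z),L(p-\bar z)\rangle=\langle q,p-\bar z\rangle$. Assembling everything, all the linear-in-$(p-\bar z)$ terms must cancel against the expansion of the exact quadratic $\frac{1}{2\gamma}\|p-r+\gamma q\|^2=\frac{1}{2\gamma}\|(p-\bar z)+(\gamma\nabla g^\gamma(z)+\gamma q)\|^2$, leaving $\varphi^\gamma(z)-\varphi(p)$ equal to $\frac{1}{2\gamma}\|p-\bar z\|^2$ plus a $g,h$-error bounded in absolute value by $\frac{L_g+L_h\|L\|^2}{2}\|p-\bar z\|^2$, which gives both $C_1(\gamma)$ and $C_2(\gamma)$.

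The main obstacle is bookkeeping: verifying that the first-order (linear) terms coming from the Moreau-envelope identity for $g$, from the Lipschitz expansions of $g$ and $h$, and from cross-multiplying the exact $f$-quadratic all cancel exactly, leaving only a clean multiple of $\|p-\bar z\|^2$ plus a controllable error. There is no conceptual difficulty — each ingredient (Lemma~\ref{Moreau}, the descent lemma for $C^{1,1}$ functions, and optimality of $\prox_{\gamma f}$) is standard — but the algebra is dense and the sign conventions must be tracked carefully, particularly the identification $r-\gamma q = z-2\gamma\nabla g^\gamma(z)-\gamma q$ that makes the $f^\gamma$ argument in \eqref{def1} match the DYS step, and the fact that $p-\bar z = w$ is exactly the quantity whose norm appears in the bounds.
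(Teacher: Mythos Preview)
Your proposal is correct and follows essentially the same route as the paper's proof: rewrite $\varphi^\gamma$ via the Moreau-envelope identities for $g$ and $f$ (the paper does this as its display \eqref{def2}/\eqref{def3}), upper-bound the $f^\gamma$ term at $\bar z$ for \eqref{inequ1}, use its exact value $f(p)+\tfrac{1}{2\gamma}\|p-r+\gamma q\|^2$ for \eqref{inequ2}--\eqref{inequ3}, and then apply the two-sided descent inequalities for $g$ and $h$. The only cosmetic difference is that the paper expands from the ``two-min'' form \eqref{def2} while you expand directly from \eqref{def1}; after using $g^\gamma(z)=g(\bar z)+\tfrac{\gamma}{2}\|\nabla g^\gamma(z)\|^2$ and $\nabla g(\bar z)=\nabla g^\gamma(z)$ these are the same computation, and the cancellation of linear terms you anticipate does occur exactly as you describe.
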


\begin{proof}[Proof of inequality \eqref{inequ1}] 
By applying Lemma \ref{Moreau} to $g$, $\varphi^{\gamma}(z)$ can be written as 
\begin{align}
    \varphi^{\gamma}(z)=&\min_{u}\{g(u)+\frac{1}{2\gamma}\|z-u\|^2\}-\gamma \|\frac{1}{\gamma}\big(z-\prox_{\gamma g}(z)\big)\|^2 \nonumber\\
    &-\gamma \langle q(z), \frac{1}{\gamma}\big(z-\prox_{\gamma g}(z)\big)\rangle \nonumber\\
    &+h\big(L\prox_{\gamma g}(z)\big)-\frac{\gamma}{2}\|q(z)\|^2 \nonumber\\
    &+\min_{u}\{f(u)+\frac{1}{2\gamma}\|-z+2\prox_{\gamma g}(z)-\gamma q(z)-u\|^2\}.
    \label{def2}
\end{align}
Taking $u=\prox_{\gamma g}(z)$ in the two minimums of \eqref{def2}, we have
\begin{align*}
    \varphi^{\gamma}(z)
    \leq&\, g\big(\prox_{\gamma g}(z)\big)+\frac{1}{2\gamma}\|z-\prox_{\gamma g}(z)\|^2-\gamma \|\frac{1}{\gamma}\big(z-\prox_{\gamma g}(z)\big)\|^2\\
    &-\langle q(z), z- \prox_{\gamma g}(z)\rangle\\
    &+h\big(L\prox_{\gamma g}(z)\big)-\frac{\gamma}{2}\|q(z)\|^2\\
    &+f\big(\prox_{\gamma g}(z)\big)+\frac{1}{2\gamma}\|-z+\prox_{\gamma g}(z)-\gamma q(z)\|^2\\
    =&\varphi\big(\prox_{\gamma g}(z)\big).
\end{align*}

\end{proof}
\begin{proof}[Proof of inequality \eqref{inequ2}] 
According to \cite[Thm. 1.25]{rockafellar2009variational}, we know that
$\prox_{\gamma f}\big(2\prox_{\gamma g}(z)-z-\gamma q(z)\big)\neq \varnothing$  for $\gamma\in(0,\frac{1}{L_g+L_h\|L\|^2})$.

By taking $u=\prox_{\gamma g}(z)$ in the first minimum of \eqref{def2} and $u=p(z)\in \prox_{\gamma f}(2\prox_{\gamma g}(z)-z-\gamma q(z))$ in the second, we have
\begin{align}
    \varphi^{\gamma}(z)=&\, g\big(\prox_{\gamma g}(z)\big)+\frac{1}{2\gamma}\|z-\prox_{\gamma g}(z)\|^2 \nonumber\\
    &-\gamma \langle q(z), \frac{1}{\gamma}\big(z-\prox_{\gamma g}(z)\big)\rangle \nonumber\\
    &+h\big(L\prox_{\gamma g}(z)\big)-\frac{\gamma}{2}\|q(z)\|^2 \nonumber\\
    &+f\big(p(z)\big)+\frac{1}{2\gamma}\|-z+2\prox_{\gamma g}(z)-\gamma q(z)-p(z)\|^2.
    \label{def3}
\end{align}
By making use of 
\[
h(y)\geq h(x)-\langle \nabla h(y),x-y\rangle-\frac{L_h}{2} \|x-y\|^2\,\,\, \mathrm{for any}\,\, x,y\in \Rm,
\] 
we arrive at
\begin{align*}
\varphi^{\gamma}(z)
    \geq&\,g\big(\prox_{\gamma g}(z)\big)-\frac{1}{2\gamma}\|z-\prox_{\gamma g}(z)\|^2\\
    &-\langle q(z), z-\prox_{P\gamma g}(z)\rangle\\
    &+h\big(Lp(z)\big)-\langle \nabla h\big(L\prox_{\gamma g}(z)\big), L(p(z)-\prox_{\gamma g}(z)) \rangle\\
    &-\frac{L_h}{2}\|L\big(p(z)-\prox_{\gamma g}(z)\big)\|^2-\frac{\gamma}{2}\|q(z)\|^2\\
    &+f\big(p(z)\big)+\frac{1}{2\gamma}\|2\prox_{\gamma g}(z)-z-\gamma q(z)-p(z)\|^2.
\end{align*}
Next, by making use of $\|a+b+c\|^2=\|a\|^2+\|b^2\|+\|c\|^2+2\langle a, b\rangle+2\langle b, c\rangle+2\langle a, c\rangle$ for
\begin{align*}
a&=\prox_{\gamma g}(z)-p(z),\\
b&=\prox_{\gamma g}(z)-z,\\
c&=-\gamma q(z),
\end{align*}
we obtain
\begin{align*}
    \varphi^{\gamma}(z)\geq &
    g\big(\prox_{\gamma g}(z)\big)+h\big(Lp(z)\big)-\frac{L_h}{2}\|L\big(p(z)-\prox_{\gamma g}(z)\big)\|^2\\
    &+f\big(p(z)\big)+\frac{1}{2\gamma}\|\prox_{\gamma g}(z)-p(z)\|^2+\langle \prox_{\gamma g}(z)-p(z), \frac{1}{\gamma}(\prox_{\gamma g}(z)-z)\rangle.
\end{align*}
Finally, by substituting
\begin{align*}
    \nabla g\big(\prox_{\gamma g}(z)\big)&=-\frac{1}{\gamma}(\prox_{\gamma g}(z)-z),\\
    g(y)&\geq g(x)-\langle \nabla g(y),x-y\rangle-\frac{L_g}{2} \|x-y\|^2\,\,\,\ \mathrm{for any}\,\, x,y\in \Rn,
\end{align*}
we arrive at \eqref{inequ2}.
\end{proof}
\begin{proof}[Proof of inequality \eqref{inequ3}]
Similarly to the proof above, we can also start from \eqref{def3} and apply \[h(y)\leq h(x)-\langle \nabla h(y),x-y\rangle+\frac{L_g}{2} \|x-y\|^2 \,\,\,\mathrm{for any}\,\, x,y\in \Rm,\]  
\[g(y)\leq g(x)-\langle \nabla g(y),x-y\rangle+\frac{L_g}{2} \|x-y\|^2 \,\,\,\mathrm{ for any} \,\,x,y\in \Rn,\]
which gives \eqref{inequ3}.
\end{proof}

Now we can establish the direct connections between the global and local minimizers of $\varphi^{\gamma}$ and those of $\varphi$. These results generalize \cite[Prop. 2.3]{themelis2016forward} and \cite[Thm. 4.4]{stella2017proximal}.

\begin{theorem}
\label{globalmin}
Under Assumption \ref{assumption 1}, 
we have
\begin{enumerate}
    \item $\inf_{x\in \Rn}\varphi(x)=\inf_{z\in\Rn}\varphi^{\gamma}(z)$,
    \item $\argmin_{x\in\Rn}\varphi(x)=\prox_{\gamma g}\Big(\argmin_{z\in\Rn}\big(\varphi^{\gamma}(z)\big)\Big)$.
\end{enumerate}
\end{theorem}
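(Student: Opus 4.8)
The plan is to deduce both statements from the sandwich inequalities in Lemma~\ref{two inequalities}, using the fact that $\prox_{\gamma g}$ is everywhere defined and single-valued (Assumption~\ref{assumption 1}, Lemma~\ref{Moreau}), together with the observation that taking $u=\prox_{\gamma g}(z)$ in both minima of \eqref{def2} realizes the value $\varphi(\prox_{\gamma g}(z))$.

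For part~1, the inequality \eqref{inequ1} gives $\varphi^{\gamma}(z)\le\varphi(\prox_{\gamma g}(z))$ for every $z$, and taking the infimum over $z$ yields $\inf_z\varphi^{\gamma}(z)\le\inf_x\varphi(x)$ once I note that $\prox_{\gamma g}$ is surjective onto $\Rn$ — indeed, for any $x$, the point $z=x+\gamma\nabla g(x)$ satisfies $\prox_{\gamma g}(z)=x$ since $\prox_{\gamma g}=(\mathrm{Id}+\gamma\nabla g)^{-1}$. Conversely, \eqref{inequ2} reads $\varphi^{\gamma}(z)\ge\varphi(p(z))+C_1(\gamma)\|p(z)-\prox_{\gamma g}(z)\|^2\ge\varphi(p(z))\ge\inf_x\varphi(x)$, where $C_1(\gamma)>0$ because $\gamma<\tfrac{1}{L_g+L_h\|L\|^2}$; taking the infimum over $z$ gives $\inf_z\varphi^{\gamma}(z)\ge\inf_x\varphi(x)$. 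The two bounds together prove the equality.

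For part~2, I would argue by double inclusion. If $z^*\in\argmin_z\varphi^{\gamma}$, then using part~1 and the chain $\varphi^{\gamma}(z^*)\ge\varphi(p(z^*))+C_1(\gamma)\|p(z^*)-\prox_{\gamma g}(z^*)\|^2\ge\inf_x\varphi=\inf_z\varphi^{\gamma}=\varphi^{\gamma}(z^*)$ forces both that $p(z^*)\in\argmin_x\varphi$ \emph{and} that $\|p(z^*)-\prox_{\gamma g}(z^*)\|=0$, i.e.\ $\prox_{\gamma g}(z^*)=p(z^*)\in\argmin_x\varphi$; this shows $\prox_{\gamma g}(\argmin_z\varphi^{\gamma})\subseteq\argmin_x\varphi$. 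For the reverse inclusion, given $x^*\in\argmin_x\varphi$, set $z^*=x^*+\gamma\nabla g(x^*)$ so that $\prox_{\gamma g}(z^*)=x^*$; then \eqref{inequ1} gives $\varphi^{\gamma}(z^*)\le\varphi(x^*)=\inf_x\varphi=\inf_z\varphi^{\gamma}$, so $z^*\in\argmin_z\varphi^{\gamma}$ and $x^*=\prox_{\gamma g}(z^*)$ lies in the right-hand set. Combining the inclusions gives the claimed identity.

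I expect the main subtlety to be the bookkeeping in the direction $\argmin_x\varphi\subseteq\prox_{\gamma g}(\argmin_z\varphi^{\gamma})$: one must exhibit a preimage $z^*$ of a given minimizer $x^*$ under $\prox_{\gamma g}$ and verify it actually minimizes $\varphi^{\gamma}$, which hinges on the explicit formula $\prox_{\gamma g}=(\mathrm{Id}+\gamma\nabla g)^{-1}$ from Lemma~\ref{Moreau} (valid since $g$ is $L_g$-Lipschitz differentiable and $\gamma<1/L_g$). The positivity of $C_1(\gamma)$ is what upgrades "$p(z^*)$ is a minimizer" to "$p(z^*)=\prox_{\gamma g}(z^*)$," and this is the only place the upper bound on $\gamma$ is essential; the upper inequality \eqref{inequ3} is not needed for this theorem. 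No compactness or continuity of $\argmin$ is required since every step is a pointwise comparison.
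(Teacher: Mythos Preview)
Your proposal is correct and follows essentially the same approach as the paper: both arguments rest on the sandwich inequalities \eqref{inequ1}--\eqref{inequ2} of Lemma~\ref{two inequalities} together with the preimage construction $z^*=(\mathrm{Id}+\gamma\nabla g)(x^*)$. Your version of part~2 is in fact slightly cleaner than the paper's, since you invoke part~1 directly and thereby avoid the paper's auxiliary point $z^{**}$; but the underlying mechanism is identical.
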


\begin{proof}[Proof of 1]
From \eqref{inequ1} we have
\[
\inf_{z\in\Rn} \varphi^{\gamma}(z)\leq \inf_{x\in \Rn}\varphi(x), 
\]
If $\inf_{z\in\Rn} \varphi^{\gamma}(z)< \inf_{x\in \Rn}\varphi(x) $, then there exists $z_1\in\Rn$ such that $\varphi^{\gamma}(z_1)<\inf_{x\in \Rn}\varphi(x)$.  Then \eqref{inequ2} gives
\[
\inf_{x\in\Rn} \varphi(x)>\varphi^{\gamma}(z_1)\geq \varphi\big(p(z_1)\big)+C(\gamma)\|\prox_{\gamma g}(z_1)-p(z_1)\|^2\geq \varphi\big(p(z_1)\big),
\]
which is a contradiction.
\end{proof}

\begin{proof}[Proof of 2]
Let us first show that 
\[
\argmin_{x\in\Rn}\varphi(x)\subseteq\prox_{\gamma g}\Big(\argmin_{z\in\Rn}\big(\varphi^{\gamma}(z)\big)\Big).
\]
There is nothing to show if $\argmin_{x\in\Rn}\varphi(x)=\varnothing$; If $\argmin_{x\in\Rn}\varphi(x)\neq\varnothing$, then,  for any $x^*\in \argmin_{x\in\Rn}\varphi(x)$, we have
$x^*=\prox_{\gamma g}(z^*)$ for $z^*=(I+\gamma \nabla g)(x^*)$. As a result, \eqref{inequ1} and \eqref{inequ2} give us
\[
\inf_{x\in\Rn}\varphi(x)=\varphi(x^*)=\varphi\big(\prox_{\gamma g}(z^*)\big)\geq \varphi^{\gamma}(z^*)\geq \varphi\big(p(z^*)\big)+C(\gamma)\|\prox_{\gamma g}(z^*)-p(z^*)\|^2.
\]
Which enforces $\prox_{\gamma g}(z^*)=p(z^*)$ and $\varphi\big(\prox_{\gamma g}(z^*)\big)=\varphi^{\gamma}(z^*)$. So for any $z\in\Rn$ we have
\[
\varphi^{\gamma}(z^*)=\inf_{x\in\Rn}\varphi(x)\leq\varphi\big(p(z)\big)\leq \varphi^{\gamma}(z)-C(\gamma)\|\prox_{\gamma g}(z)-p(z)\|^2\leq \varphi^{\gamma}(z),
\]
which yields $z^*\in\argmin_{z\in\Rn}\varphi^{\gamma}(z), x^*\in\prox_{\gamma g}\Big(\argmin_{z\in\Rn}\big(\varphi^{\gamma}(z)\big)\Big)$.

Now let us show that 
\[
\prox_{\gamma g}\Big(\argmin_{z\in\Rn}\big(\varphi^{\gamma}(z)\big)\Big)\subseteq\argmin_{x\in\Rn}\varphi(x).
\]
Again, we can assume that
$\argmin_{z\in\Rn}\big(\varphi^{\gamma}(z)\big)\neq\varnothing$. For any $z^*\in \argmin_{z\in\Rn} \varphi^{\gamma}(z)$, we need to show $\prox_{\gamma g}(z^*)\in \argmin_{x\in\Rn}\varphi(x)$.

Let $z^{**}=(I+\gamma \nabla g)p(z^*)$, 
then $\prox_{\gamma g}(z^{**})=p(z^*)$ and \eqref{inequ1} and \eqref{inequ2} give us
\[
\varphi^{\gamma}(z^{**})\leq \varphi(\prox_{\gamma g}\big(z^{**})\big)=\varphi\big(p(z^*)\big)\leq \varphi^{\gamma}(z^*)-C(\gamma)\|\prox_{\gamma g}(z^*)-p(z^*)\|^2.
\]
Since $z^*\in \argmin_{z\in\Rn} \varphi^{\gamma}(z)$, we must have \[
\prox_{\gamma g}(z^*)=p(z^*)=\prox_{\gamma g}(z^{**}),
\]
\[
\varphi^{\gamma}(z^{*})=\varphi^{\gamma}(z^{**})=\varphi\big(\prox_{\gamma g}(z^*)\big).
\]
Consequently, for any $z\in\Rn$ we have
\[
\varphi\big(\prox_{\gamma g}(z^*)\big)=\varphi^{\gamma}(z^{*})\leq\varphi^{\gamma}(z)\leq \varphi\big(\prox_{\gamma g}(z)\big),
\]
which concludes $\prox_{\gamma g}(z^*)\in \argmin_{x\in\Rn}\varphi(x).$
\end{proof}

It turns out that the local minimizers of $\varphi$ and $\varphi^{\gamma}$ also have a one-to-one correspondence.

\begin{theorem}
\label{localmin}
Under Assumptions \ref{assumption 1} and \ref{assumption 2}, 
we have:
\begin{enumerate}
    \item If $\prox_{\gamma g}(z^*)\in \argmin_{x\in B(\prox_{\gamma g}(z^*),\delta)}\varphi(x)$ for some $\delta>0$, 
    then 
    $z^*$ is a local minimizer of $\varphi^{\gamma}$.
    \item If $z^*\in \argmin_{z\in B(z^*,\varepsilon)}\varphi^{\gamma}(z)$ for some $\varepsilon>0$, then 
    \[
    \varphi\big(\prox_{\gamma g}(z^*)\big)\leq \varphi\big(\prox_{\gamma g}(z)\big)\,\,\, \text{for all $z$ such that }\,\,\, \|z-z^*\|\leq \varepsilon.
    \]
    That is, $\prox_{\gamma g}(z^*)$ is a local minimizer of $\varphi(x)$.
\end{enumerate}
\end{theorem}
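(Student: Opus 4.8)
The plan is to derive both items from the sandwich inequalities of Lemma~\ref{two inequalities}, using the fact that the quadratic remainder $\|p(z)-\prox_{\gamma g}(z)\|^2$ vanishes exactly at the point $z^*$ in question, so that $\varphi^{\gamma}(z^*)=\varphi\big(\prox_{\gamma g}(z^*)\big)$. Throughout write $x^*:=\prox_{\gamma g}(z^*)$. Since $g$ is $L_g$-Lipschitz differentiable and $\gamma<1/L_g$, the map $\mathrm{Id}+\gamma\nabla g$ is a $C^1$ bijection with everywhere-invertible Jacobian, hence a diffeomorphism whose inverse is $\prox_{\gamma g}$; in particular $z^*=(\mathrm{Id}+\gamma\nabla g)(x^*)$ and $\prox_{\gamma g}$ sends open neighborhoods of $z^*$ onto open neighborhoods of $x^*$. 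Under Assumption~\ref{assumption 2}, $\prox_{\gamma f}$ is single-valued and Lipschitz (Lemma~\ref{Moreau}), so $p(z)=\prox_{\gamma f}\big(2\prox_{\gamma g}(z)-z-\gamma q(z)\big)$ is a single-valued continuous function of $z$, and each term in \eqref{def1} is $C^1$ (using $\prox_{\gamma g}\in C^1$ via the Inverse Function Theorem), so $\varphi^{\gamma}$ is $C^1$.

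For item~1, I would first show $p(z^*)=x^*$. Local minimality of $\varphi$ at $x^*$ gives $\mathbf{0}\in\partial\varphi(x^*)$; since $g$ and $h\circ L$ are differentiable, the subdifferential sum rule yields $-\nabla g(x^*)-L^T\nabla h(Lx^*)\in\partial f(x^*)$. Substituting $z^*=(\mathrm{Id}+\gamma\nabla g)(x^*)$ into the definition of $p$ shows that the argument of $\prox_{\gamma f}$ there equals $x^*-\gamma\nabla g(x^*)-\gamma L^T\nabla h(Lx^*)$, and the optimality characterization $\prox_{\gamma f}(y)=x\iff y\in x+\gamma\partial f(x)$ (valid for $\gamma<1/\beta_f$) then gives $p(z^*)=x^*$. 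Consequently \eqref{inequ1} and \eqref{inequ2} sandwich $\varphi^{\gamma}(z^*)$ between $\varphi(x^*)$ and $\varphi(x^*)+C_1(\gamma)\cdot 0$, so $\varphi^{\gamma}(z^*)=\varphi(x^*)$. Finally, by continuity of $p$ at $z^*$ choose $\varepsilon>0$ with $p\big(B(z^*,\varepsilon)\big)\subseteq B(x^*,\delta)$; then for $z\in B(z^*,\varepsilon)$, inequality \eqref{inequ2} gives $\varphi^{\gamma}(z)\ge\varphi\big(p(z)\big)\ge\varphi(x^*)=\varphi^{\gamma}(z^*)$, so $z^*$ is a local minimizer of $\varphi^{\gamma}$.

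For item~2, the leverage is differentiability: $z^*$ is a local minimizer of the $C^1$ function $\varphi^{\gamma}$, hence $\nabla\varphi^{\gamma}(z^*)=\mathbf{0}$. The identity established inside the proof of Theorem~\ref{gradient descent}, namely $A(z^*)\big(p(z^*)-\prox_{\gamma g}(z^*)\big)=-\gamma\nabla\varphi^{\gamma}(z^*)$, together with the invertibility of $A(z^*)$ (Lemma~\ref{g related differentiablility}), forces $p(z^*)=\prox_{\gamma g}(z^*)=x^*$. As before, \eqref{inequ1} and \eqref{inequ2} then give $\varphi^{\gamma}(z^*)=\varphi(x^*)$. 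Now for any $z$ with $\|z-z^*\|\le\varepsilon$, inequality \eqref{inequ1} and the local minimality of $z^*$ yield $\varphi\big(\prox_{\gamma g}(z)\big)\ge\varphi^{\gamma}(z)\ge\varphi^{\gamma}(z^*)=\varphi(x^*)=\varphi\big(\prox_{\gamma g}(z^*)\big)$; and since $\prox_{\gamma g}$ maps $B(z^*,\varepsilon)$ onto a neighborhood of $x^*$, this is precisely the assertion that $x^*$ is a local minimizer of $\varphi$.

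The main obstacle is establishing $p(z^*)=\prox_{\gamma g}(z^*)$: globally (Theorem~\ref{globalmin}) it followed effortlessly from a contradiction against the infimum, but locally it must be produced by other means — via Fermat's rule and the sum rule for subdifferentials in item~1, and via first-order optimality of $\varphi^{\gamma}$ combined with invertibility of $A(z^*)$ in item~2. Secondary care is needed to justify that $p$ is genuinely single-valued and continuous (so the neighborhood transfer in item~1 is legitimate), which is exactly where Assumption~\ref{assumption 2}'s weak convexity of $f$ is used, and that $\prox_{\gamma g}$ is an open map, which is what lets the conclusion of item~2 be read as ordinary local minimality of $\varphi$ at $x^*$.
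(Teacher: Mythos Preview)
Your proposal is correct and follows essentially the same route as the paper: in both parts you obtain $p(z^*)=\prox_{\gamma g}(z^*)$ (via Fermat's rule in part~1, via $\nabla\varphi^\gamma(z^*)=\mathbf{0}$ and the invertibility of $A(z^*)$ in part~2), deduce $\varphi^\gamma(z^*)=\varphi(x^*)$ from the sandwich \eqref{inequ1}--\eqref{inequ2}, and then chain the appropriate inequality with continuity of $p$ (part~1) or openness of $\prox_{\gamma g}$ (part~2). The only cosmetic difference is that the paper phrases the openness of $\prox_{\gamma g}$ via the explicit Jacobian $\big(I+\gamma\nabla^2 g(\prox_{\gamma g}(z^*))\big)^{-1}$, whereas you invoke it as a diffeomorphism inverting $\mathrm{Id}+\gamma\nabla g$.
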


\begin{proof}[Proof of 1]
Since $\prox_{\gamma g}(z^*)$ is a local minimizer of $\varphi$, according to \cite[Exercise 10.10]{rockafellar2009variational}, we have 
\[
\mathbf{0}\in \partial \varphi\big(\prox_{\gamma g}(z^*)\big)= \partial f\big(\prox_{\gamma g}(z^*)\big)+\nabla g\big(\prox_{\gamma g}(z^*)\big)+q\big(z^{*})\big).
\]
Since $\prox_{\gamma g}$ is single-valued, this is equivalent to
\[
\mathbf{0}\in \partial f\big(\prox_{\gamma g}(z^*)\big)+\frac{1}{\gamma}(-\prox_{\gamma g}(z^*)+z^*+\gamma q\big(z^{*})\big),
\]
Since $f+\frac{1}{2\gamma}\|\cdot\|^2$ is convex and $\prox_{\gamma f}$ is single valued, this is further equivalent to
\[
\prox_{\gamma g}(z^*)=\prox_{\gamma f}\big(2\prox_{\gamma g}(z^*)-z^*-\gamma q(z^{*})\big)=p(z^*).
\]
According to Lemma \ref{Moreau}, $\prox_{\gamma f}$ is $\frac{1}{1-\gamma \beta_f}-$Lipschitz continuous, we can conclude that there exists $\eta>0$ such that when $\|z-z^*\|\leq \eta$, we have $\|p(z)-p(z^*)\|\leq \delta$ and
\[
\varphi^{\gamma}(z^*)=\varphi\big(\prox_{\gamma g}(z^*)\big)=\varphi\big(p(z^*)\big)\leq \varphi\big(p(z)\big)\leq \varphi^{\gamma}(z)-C(\gamma)\|\prox_{\gamma g}(z)-p(z)\|^2\leq \varphi^{\gamma}(z).
\]
\end{proof}

\begin{proof}[Proof of 2]
According to Lemma \ref{g related differentiablility}, $A(z)$ is invertible at $z^*$. Theorem \ref{gradient descent} tells us that $\varphi^{\gamma}$ is differentiable at $z^*$, so $\nabla \varphi^{\gamma}(z^*)=\mathbf{0}$ and $\prox_{\gamma g}(z^*)=p(z^*)$. As a result, for any $z\in\Rn$ with $\|z-z^*\|\leq \varepsilon$ we have
\[
\varphi\big(\prox_{\gamma g}(z^*)\big)=\varphi^{\gamma}(z^*)\leq \varphi^{\gamma
}(z)\leq \varphi\big(\prox_{\gamma g}(z)\big).
\]
Furthermore, according to Lemma \ref{g related differentiablility} we have
\[
\prox_{\gamma g}(z)= \prox_{\gamma g}(z^*)+\Big(I+\gamma \nabla^2g\big(\prox_{\gamma g}(z^*)\big)\Big)^{-1}(z-z^*)+o(\|z-z^*\|).
\]
Since $\Big(I+\gamma \nabla^2g\big(\prox_{\gamma g}(z^*)\big)\Big)^{-1}$ is positive definite, we can conclude that $\prox_{\gamma g}\big(B(z^*,\varepsilon)\big)$ contains a ball centered at $\prox_{\gamma g}(z^*)$, as a result, $\prox_{\gamma g}(z^*)$ is a local minimizer of $\varphi(x)$.
\end{proof}

Now, let us show the one-to-one correspondence between the critical points of the envelope $\varphi^{\gamma}$ and the stationary points of the objective $\varphi(x)$.

\begin{theorem}
\label{critical point and stationary point}
Under Assumptions \ref{assumption 1} and \ref{assumption 2}, $z^*$ is a critical point of $\varphi^{\gamma}$ if and only if $\prox_{\gamma g}(z^*)$ is a stationary point of $\varphi$.
\end{theorem}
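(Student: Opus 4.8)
The plan is to read off $\nabla\varphi^\gamma$ from the computation already carried out in the proof of Theorem~\ref{gradient descent}, reduce the criticality of $z^*$ to a proximal fixed-point identity, and then match that identity to the stationarity condition for $\varphi$ using the optimality characterization of the proximal map of a weakly convex function together with the subdifferential sum rule.

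First I would note that, under Assumptions~\ref{assumption 1} and~\ref{assumption 2}, Theorem~\ref{gradient descent} already yields that $\varphi^\gamma$ is differentiable at every point (so ``$z^*$ is a critical point of $\varphi^\gamma$'' is equivalent to $\nabla\varphi^\gamma(z^*)=\mathbf 0$) and that
\[
A(z)\,\big(p(z)-\prox_{\gamma g}(z)\big)=-\gamma\,\nabla\varphi^\gamma(z),
\]
where $q(z)=L^{T}\nabla h\big(L\prox_{\gamma g}(z)\big)$ and $p(z)=\prox_{\gamma f}\big(2\prox_{\gamma g}(z)-z-\gamma q(z)\big)$ is single-valued by Lemma~\ref{Moreau} (here $\gamma\in(0,1/\beta_f)$ is used). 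Since $A(z^*)$ is invertible by Lemma~\ref{g related differentiablility}, we obtain
\[
\nabla\varphi^\gamma(z^*)=\mathbf 0\ \Longleftrightarrow\ p(z^*)=\prox_{\gamma g}(z^*).
\]
It then remains to show that, with $x^*:=\prox_{\gamma g}(z^*)$, we have $p(z^*)=x^*$ if and only if $\mathbf 0\in\partial\varphi(x^*)$.

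For this I would unwind the proximal step defining $p(z^*)$. Since $x^*=\prox_{\gamma g}(z^*)=(\mathrm{Id}+\gamma\nabla g)^{-1}(z^*)$, we have $z^*-x^*=\gamma\nabla g(x^*)$, hence
\[
\big(2\prox_{\gamma g}(z^*)-z^*-\gamma q(z^*)\big)-x^*=-\gamma\big(\nabla g(x^*)+L^{T}\nabla h(Lx^*)\big).
\]
Because $f$ is $\beta_f$-weakly convex and $\gamma<1/\beta_f$, the function $f(\cdot)+\tfrac{1}{2\gamma}\|\cdot-v\|^2$ is strongly convex, so $u=\prox_{\gamma f}(v)$ holds if and only if $v-u\in\gamma\,\partial f(u)$ (both necessity and sufficiency). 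Applying this with $u=x^*$ and $v=2\prox_{\gamma g}(z^*)-z^*-\gamma q(z^*)$, the displayed identity shows that $p(z^*)=x^*$ is equivalent to $\mathbf 0\in\partial f(x^*)+\nabla g(x^*)+L^{T}\nabla h(Lx^*)$. Finally, since $g$ and $h\circ L$ are $C^1$, the exact subdifferential sum rule (e.g.\ \cite[Exercise~10.10]{rockafellar2009variational}, as invoked in the proof of Theorem~\ref{localmin}) gives $\partial\varphi(x^*)=\partial f(x^*)+\nabla g(x^*)+L^{T}\nabla h(Lx^*)$, so this last condition is precisely $\mathbf 0\in\partial\varphi(x^*)$. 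Chaining the equivalences proves both directions.

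The main obstacle I anticipate is making the ``$\Leftarrow$'' direction airtight: one must use $z^*=x^*+\gamma\nabla g(x^*)$ to see that the argument fed into $\prox_{\gamma f}$ really collapses to $x^*-\gamma\big(\nabla g(x^*)+L^{T}\nabla h(Lx^*)\big)$, and one must apply the weak-convexity proximal optimality in the \emph{sufficient} direction, which is exactly where $\gamma<1/\beta_f$ is indispensable. All remaining ingredients --- differentiability of $\varphi^\gamma$, the gradient formula, and invertibility of $A(z^*)$ --- are inherited verbatim from Theorem~\ref{gradient descent} and Lemma~\ref{g related differentiablility}.
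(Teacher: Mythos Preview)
Your proposal is correct and follows essentially the same route as the paper: both use the gradient identity from Theorem~\ref{gradient descent} together with the invertibility of $A(z^*)$ to reduce criticality of $\varphi^\gamma$ to the fixed-point condition $p(z^*)=\prox_{\gamma g}(z^*)$, and then translate that into $\mathbf 0\in\partial\varphi\big(\prox_{\gamma g}(z^*)\big)$ via the optimality characterization of $\prox_{\gamma f}$ (using $\gamma<1/\beta_f$) and the subdifferential sum rule. Your presentation is slightly more explicit in computing the argument of $\prox_{\gamma f}$ via $z^*-x^*=\gamma\nabla g(x^*)$, but the logic and ingredients are identical.
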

\begin{proof}
Since $f$ is $\beta_f-$weakly convex and $\gamma\in(0,\frac{1}{\beta_f})$, by Lemma \ref{Moreau}, we know that $\prox_{\gamma f}$ is single-valued. And by Theorem \ref{gradient descent}, we have
\begin{equation}
\label{first order derivative}
\nabla \varphi^{\gamma}(z)=-A(z)\frac{1}{\gamma}(p(z)-\prox_{\gamma g}(z)),
\end{equation}
where $p(z)=\prox_{\gamma f}\Big((2\prox_{\gamma g}(z)-z-\gamma L^T\nabla h\big(L\prox_{\gamma g}(z)\big)\Big)$. So $\nabla \varphi^{\gamma}(z^*)=\mathbf{0}$ if and only if
\begin{align*}
\prox_{\gamma g}(z^*)&=\prox_{\gamma f}\Big(2\prox_{\gamma g}(z^*)-z^*-\gamma L^T\nabla h(L\prox_{\gamma g}(z^*)\big)\Big)\nonumber\\
&=\argmin_{z}\{f(z)+\frac{1}{2\gamma}\|z-\Big(2\prox_{\gamma g}(z^*)-z^*-\gamma L^T\nabla h\big(L\prox_{\gamma g}(z^*)\big)\Big)\|^2\}.
\end{align*}
Since the objective in the $\argmin$ is convex,  by \cite[Exercise 10.10]{rockafellar2009variational} we know that this is equivalent to 
\[
\mathbf{0}\in \partial f\big(\prox_{\gamma g}(z^*)\big)+\frac{1}{\gamma}\Big(-\prox_{\gamma g}(z^*)+z^*+\gamma L^T\nabla h(L\prox_{\gamma g}(z^*)\big)\Big).
\]
By the definition of $\prox_{\gamma g}$ and $\gamma\in(0,\frac{1}{L_g+L_h\|L\|^2})$, this is further equivalent to
\[
\mathbf{0}\in \partial f\big(\prox_{\gamma g}(z^*)\big)+\nabla g\big(\prox_{\gamma g}(z^*)\big)+L^T\nabla h\big(L\prox_{\gamma g}(z^*)\big)=\partial \varphi\big(\prox_{\gamma g}(z^*)\big).
\]
\end{proof}

In order to establish the correspondence between the strict saddles of $\varphi^{\gamma}$ and $\varphi$, we also need the following assumption.
\begin{assumption}
\label{assumption 3}
    For any critical point $z^*$ of $\varphi^{\gamma}$, $f$ is twice continuously differentiable in a small neighbourhood of $\prox_{\gamma g}(z^*)$, and there exits $L_f>0$ such that  $\|\nabla^2 f(\prox_{\gamma g}(z^*))\|\leq L_f$. In addition, $\gamma\in(0,\frac{1}{L_f})$.
\end{assumption}

\begin{lemma}
\label{twice differentiability of envelope}
Let $z^*$ be a critical point of $\varphi^{\gamma}$. Under Assumptions \ref{assumption 1}, \ref{assumption 2} and \ref{assumption 3}, 
$\varphi^{\gamma}$ is twice differentiable at $z^*$ and
\begin{align}
\nabla^2\varphi^{\gamma}(z^*)&=-A(z^*)\frac{1}{\gamma}\big(J_p(z^*)-J_{\prox_{\gamma g}}(z^*)\big)\label{second order derivative}\\
&=-A(z^*)\frac{1}{\gamma}\Big(I+\gamma \nabla^2 f\big(p(z)\big)\Big)^{-1}A(z^*)+\frac{1}{\gamma}A(z^*)\Big(I+\gamma \nabla^2 g\big(\prox_{\gamma g}(z^*)\big)\Big)^{-1}.\label{expression of second order}
\end{align}
In case of $h=0$ (DRS) and $g=0$ (FBS), $\nabla^2\varphi^{\gamma}(z^*)$ is symmetric.
\end{lemma}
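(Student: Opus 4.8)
The plan is to compute $\nabla^2\varphi^\gamma(z^*)$ by differentiating the first-order identity $\nabla\varphi^\gamma(z) = -A(z)\,\tfrac{1}{\gamma}\big(p(z)-\prox_{\gamma g}(z)\big)$ from \eqref{first order derivative}. Since $z^*$ is a critical point, Theorem~\ref{critical point and stationary point} gives $p(z^*) = \prox_{\gamma g}(z^*)$, so the term in which $A(z)$ itself is differentiated is multiplied by the vanishing factor $p(z^*)-\prox_{\gamma g}(z^*) = \mathbf{0}$ and drops out. This is the key simplification: it means only $A(z^*)$ — not its derivative — appears, and we get
\[
\nabla^2\varphi^\gamma(z^*) = -A(z^*)\tfrac{1}{\gamma}\big(J_p(z^*) - J_{\prox_{\gamma g}}(z^*)\big),
\]
which is \eqref{second order derivative}. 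To make this rigorous I first need that all the pieces are differentiable near $z^*$: $\prox_{\gamma g}$ has a Jacobian at $z^*$ with $J_{\prox_{\gamma g}}(z^*) = (I+\gamma\nabla^2 g(\prox_{\gamma g}(z^*)))^{-1}$ by Lemma~\ref{g related differentiablility}; $q(z) = L^T\nabla h(L\prox_{\gamma g}(z))$ is then $C^1$ near $z^*$ by Assumption~\ref{assumption 2}; and $\prox_{\gamma f} = (\mathrm{Id}+\gamma\nabla f)^{-1}$ near $p(z^*)$ is $C^1$ with invertible Jacobian $(I+\gamma\nabla^2 f(p(z^*)))^{-1}$ by Assumption~\ref{assumption 3}, $\gamma\in(0,1/L_f)$, and the Inverse Function Theorem, exactly as in the proof of Lemma~\ref{g related differentiablility}.

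Next I would compute $J_p(z^*)$ via the chain rule. Writing $p(z) = \prox_{\gamma f}\big(s(z)\big)$ with $s(z) = 2\prox_{\gamma g}(z) - z - \gamma q(z)$, the Jacobian of the inner map is $J_s(z^*) = 2J_{\prox_{\gamma g}}(z^*) - I - \gamma J_q(z^*)$, and a short computation identifies this with $A(z^*)$ as defined in \eqref{A}: indeed $2J_{\prox_{\gamma g}}(z^*) - I = I - 2\gamma\nabla^2 g^\gamma(z^*)$ using the Hessian formula for $g^\gamma$ in Lemma~\ref{g related differentiablility}, and $\gamma J_q(z^*) = \gamma L^T\nabla^2 h(L\prox_{\gamma g}(z^*))L\,J_{\prox_{\gamma g}}(z^*) = \gamma L^T\nabla^2 h(L\prox_{\gamma g}(z^*))L(I-\gamma\nabla^2 g^\gamma(z^*))$, again by the same Hessian identity. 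Hence $J_p(z^*) = (I+\gamma\nabla^2 f(p(z^*)))^{-1}A(z^*)$, and substituting this and $J_{\prox_{\gamma g}}(z^*) = (I+\gamma\nabla^2 g(\prox_{\gamma g}(z^*)))^{-1}$ into \eqref{second order derivative} yields \eqref{expression of second order}.

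For the last assertion, consider the DRS case $h=0$: then $q\equiv\mathbf{0}$, so $A(z^*) = 2(I+\gamma\nabla^2 g(\prox_{\gamma g}(z^*)))^{-1} - I = I - 2\gamma\nabla^2 g^\gamma(z^*)$, which is symmetric, and moreover $A(z^*)$ commutes with $(I+\gamma\nabla^2 g(\prox_{\gamma g}(z^*)))^{-1}$ since both are functions of the single symmetric matrix $\nabla^2 g(\prox_{\gamma g}(z^*))$. Feeding this into \eqref{expression of second order}, each of the two terms $-\tfrac{1}{\gamma}A(z^*)(I+\gamma\nabla^2 f(p(z)))^{-1}A(z^*)$ and $\tfrac{1}{\gamma}A(z^*)(I+\gamma\nabla^2 g(\prox_{\gamma g}(z^*)))^{-1}$ is manifestly symmetric (the first is a congruence $A^T M A$ with $M$ symmetric since $A(z^*)$ is symmetric; the second is a product of two commuting symmetric matrices). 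For the FBS case $g=0$ we have $\prox_{\gamma g} = \mathrm{Id}$, $\nabla^2 g^\gamma \equiv 0$, so $A(z^*) = I - \gamma L^T\nabla^2 h(Lz^*)L$ is symmetric and $J_{\prox_{\gamma g}}(z^*) = I$, and the same two-term argument applies. The main obstacle, and the part needing care, is the bookkeeping in the chain-rule computation of $J_p(z^*)$ and the verification that it collapses exactly to $(I+\gamma\nabla^2 f(p(z^*)))^{-1}A(z^*)$; everything else follows from the already-established lemmas and the vanishing of $p(z^*)-\prox_{\gamma g}(z^*)$.
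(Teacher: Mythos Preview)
Your proposal is correct and follows essentially the same approach as the paper: differentiate \eqref{first order derivative} at $z^*$, use $p(z^*)=\prox_{\gamma g}(z^*)$ to kill the term involving the derivative of $A$, then compute $J_p(z^*)=(I+\gamma\nabla^2 f(p(z^*)))^{-1}A(z^*)$ and $J_{\prox_{\gamma g}}(z^*)=(I+\gamma\nabla^2 g(\prox_{\gamma g}(z^*)))^{-1}$ via the Inverse Function Theorem and the chain rule, and finally argue symmetry term-by-term in the two special cases. The only difference is cosmetic: the paper outsources the product-rule/vanishing-factor step to an external reference and is terser about the chain rule, while you spell out both computations explicitly; your observation that in the $h=0$ case $A(z^*)$ and $(I+\gamma\nabla^2 g(\prox_{\gamma g}(z^*)))^{-1}$ commute as functions of the same symmetric matrix is exactly the paper's argument, just phrased more conceptually.
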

\begin{proof}
\eqref{second order derivative} follows from \eqref{first order derivative}, $p(z^*)=\prox_{\gamma g}(z^*)$, and \cite[Prop. 2.A.2]{stella2017proximal}, \eqref{expression of second order} follows from Lemma \ref{g related differentiablility} and chain rule.

When $g=0$ or $h=0$, \eqref{A} tells us that $A(z^*)$ is symmetric, so the first part on the right hand side of \eqref{expression of second order} is symmetric. 

When $h=0$, we have
\[
A(z^*)\Big(I+\gamma \nabla^2 g\big(\prox_{\gamma g}(z^*)\big)\Big)^{-1}=\Big(2\Big(I+\gamma \nabla^2 g\big(\prox_{\gamma g}(z^*)\big)\Big)^{-1}-I\Big)\Big(I+\gamma \nabla^2 g\big(\prox_{\gamma g}(z^*)\big)\Big)^{-1},
\]
so the second part is also symmetric. When $g=0$, the second part is $\frac{1}{\gamma}A(z^*)$, thus symmetric.

So we can conclude that when $h=0$ or $g=0$, $\nabla^2\varphi^{\gamma}(z^*)$ is symmetric.
\end{proof}

\begin{theorem}
\label{strict saddle} 
Let $z^*$ be a critical point of $\varphi^{\gamma}$. Under Assumptions \ref{assumption 1},  \ref{assumption 2} and \ref{assumption 3}, when $h=0$ (DRS) or $g=0$ (FBS), $z^*$ is a strict saddle point of $\varphi^{\gamma}$ if and only if $\prox_{\gamma g}(z^*)$ is a strict saddle point of $\varphi$.
\end{theorem}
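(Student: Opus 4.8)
Write $x^{*}:=\prox_{\gamma g}(z^{*})$. The plan is to reduce the equivalence of strict saddle points to an \emph{inertia} statement — that $\nabla^{2}\varphi^{\gamma}(z^{*})$ and $\nabla^{2}\varphi(x^{*})$ have the same number of negative eigenvalues — and to prove that statement by a single congruence transformation together with one elementary fact about differences of inverse matrices. I would first dispose of everything in the definition of a strict saddle except the sign of the smallest eigenvalue. Since $z^{*}$ is a critical point of $\varphi^{\gamma}$, Theorem~\ref{critical point and stationary point} shows $x^{*}$ is a stationary point of $\varphi$; as $f$ is $C^{2}$ near $x^{*}$ (Assumption~\ref{assumption 3}) and $g,h$ are $C^{2}$ (Assumption~\ref{assumption 2}), the sum $\varphi=f+g+h\circ L$ is $C^{2}$ near $x^{*}$, so $\nabla\varphi(x^{*})=\mathbf{0}$ and $\nabla^{2}\varphi(x^{*})$ exists. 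Lemma~\ref{twice differentiability of envelope} gives that $\varphi^{\gamma}$ is twice differentiable at $z^{*}$ with $\nabla^{2}\varphi^{\gamma}(z^{*})$ symmetric, and this symmetry is exactly what the restriction $h=0$ or $g=0$ buys us. Hence it suffices to show $\lambda_{\min}[\nabla^{2}\varphi^{\gamma}(z^{*})]<0$ iff $\lambda_{\min}[\nabla^{2}\varphi(x^{*})]<0$; I would prove the sharper claim that the two symmetric Hessians have equally many negative eigenvalues.

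Set $A:=A(z^{*})$, $F:=I+\gamma\nabla^{2}f(x^{*})$, and $G:=I+\gamma\nabla^{2}g(x^{*})$. The $\gamma$-bounds in Assumptions~\ref{assumption 1} and~\ref{assumption 3} make $F\succ0$ and $G\succ0$, and Lemma~\ref{g related differentiablility} makes $A$ invertible. The role of the restriction $h=0$ or $g=0$ is that then $A$ is symmetric and commutes with $G$: if $h=0$ then $A=2G^{-1}-I$ is a function of $G$, and if $g=0$ then $G=I$. Therefore $P:=AG=GA$ is symmetric, and an explicit computation — $P=2I-G$ if $h=0$, and $P=A=I-\gamma L^{T}\nabla^{2}h(Lz^{*})L$ if $g=0$ — shows $P\succ0$ in both cases. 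Starting from Lemma~\ref{twice differentiability of envelope}, which at the critical point $z^{*}$ (where $p(z^{*})=x^{*}$) reads $\gamma\,\nabla^{2}\varphi^{\gamma}(z^{*})=-AF^{-1}A+AG^{-1}$, and using $AP^{-1}A=A(AG)^{-1}A=AG^{-1}A^{-1}A=AG^{-1}$, I obtain
\[
\gamma\,\nabla^{2}\varphi^{\gamma}(z^{*})=A\big(P^{-1}-F^{-1}\big)A .
\]
Since $A$ is symmetric and invertible, Sylvester's law of inertia yields that $\nabla^{2}\varphi^{\gamma}(z^{*})$ and $P^{-1}-F^{-1}$ have the same number of negative eigenvalues.

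Two short steps then remain. First, for symmetric positive definite $P,F$, the matrices $P^{-1}-F^{-1}$ and $F-P$ have the same number of negative eigenvalues: conjugating by $F^{1/2}$ and by $P^{1/2}$ turns them into $F^{1/2}P^{-1}F^{1/2}-I$ and $P^{-1/2}FP^{-1/2}-I$ respectively, and both of these matrices have the same spectrum as $P^{-1}F-I$ (the eigenvalues of a product are invariant under cyclic permutation), hence equally many negative entries. Second, I would identify $F-P$ with the objective's Hessian: if $h=0$ then $F-P=F-(2I-G)=\gamma\big(\nabla^{2}f(x^{*})+\nabla^{2}g(x^{*})\big)=\gamma\nabla^{2}\varphi(x^{*})$, and if $g=0$ then, using $x^{*}=z^{*}$, $F-P=F-\big(I-\gamma L^{T}\nabla^{2}h(Lz^{*})L\big)=\gamma\big(\nabla^{2}f(x^{*})+L^{T}\nabla^{2}h(Lx^{*})L\big)=\gamma\nabla^{2}\varphi(x^{*})$. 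Chaining the inertia-preserving equalities, $\nabla^{2}\varphi^{\gamma}(z^{*})$ and $\nabla^{2}\varphi(x^{*})$ have the same number of negative eigenvalues; in particular one has a negative eigenvalue iff the other does, and together with the critical-point correspondence this gives that $z^{*}$ is a strict saddle of $\varphi^{\gamma}$ iff $\prox_{\gamma g}(z^{*})$ is a strict saddle of $\varphi$.

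The main obstacle is entirely linear-algebraic: spotting that in the DRS ($h=0$) and FBS ($g=0$) cases the metric $A(z^{*})$ is symmetric and commutes with $I+\gamma\nabla^{2}g(x^{*})$, which is what makes the congruence $\gamma\nabla^{2}\varphi^{\gamma}(z^{*})=A(P^{-1}-F^{-1})A$ available; this is precisely the structure that fails for general DYS, where $A(z^{*})$ and $\nabla^{2}\varphi^{\gamma}(z^{*})$ are not symmetric and ``strict saddle'' is not even well defined. The auxiliary fact that $P^{-1}-F^{-1}$ and $F-P$ share inertia is standard but worth stating carefully, since $P$ and $F$ need not commute with each other.
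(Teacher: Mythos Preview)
Your proof is correct and takes a genuinely different route from the paper's. The paper argues analytically: it Taylor-expands $\varphi^{\gamma}$ and $\varphi$ around $z^{*}$ and $p(z^{*})=\prox_{\gamma g}(z^{*})$, then uses the sandwich inequalities \eqref{inequ1} and \eqref{inequ2} to transfer a direction of negative curvature from one Hessian to the other. Specifically, if $v$ is an eigenvector of $\nabla^{2}\varphi^{\gamma}(z^{*})$ with negative eigenvalue, the paper perturbs $z^{*}$ along $v$ and uses $\varphi^{\gamma}(z)\geq\varphi(p(z))$ to force $\nabla^{2}\varphi(x^{*})$ to have a negative eigenvalue; the reverse direction runs the same argument through \eqref{inequ1}.

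Your argument is purely linear-algebraic: you read off the explicit Hessian from Lemma~\ref{twice differentiability of envelope}, observe that in the two special cases $A$ commutes with $G$, and reduce everything to the congruence $\gamma\nabla^{2}\varphi^{\gamma}(z^{*})=A(P^{-1}-F^{-1})A$ followed by Sylvester's law of inertia and the standard fact that $P^{-1}-F^{-1}$ and $F-P$ share inertia when $P,F\succ0$. The computations check out in both cases, including $P\succ0$ and $F-P=\gamma\nabla^{2}\varphi(x^{*})$. What your approach buys is a \emph{stronger conclusion}: the full inertia of $\nabla^{2}\varphi^{\gamma}(z^{*})$ and $\nabla^{2}\varphi(x^{*})$ coincide, not merely the sign of the smallest eigenvalue. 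What the paper's approach buys is that it does not manipulate the Hessian formula explicitly and makes transparent which of the envelope inequalities drives each implication; it is more geometric but less sharp.
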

\begin{proof}
According to Lemma \ref{twice differentiability of envelope}, we know that $\nabla^2 \varphi^{\gamma}(z^*)$ exists and it is symmetric.

Let $z^*$ be a strict saddle point of $\varphi^{\gamma}(z)$, then Taylor expansion gives
\begin{align*}
    \varphi^{\gamma}(z)&=\varphi^{\gamma}(z^*)+\frac{1}{2}(z-z^*)^T\nabla^2\varphi^{\gamma}(z^*)(z-z^*)+o(\|z-z^*\|^2),\\
    \varphi\big(p(z)\big)&=\varphi\big(p(z^*)\big)+\frac{1}{2}\big(p(z)-p(z^*)\big)^T\nabla^2 \varphi\big(p(z^*)\big)\big(p(z)-p(z^*)\big)+o(\|p(z)-p(z^*)\|^2).
\end{align*}
On the other hand, \eqref{inequ2} gives
\[
\varphi^{\gamma}(z)\geq \varphi\big(p(z)\big).
\]
Let $\nabla^2\varphi^{\gamma}(z^*)v=\lambda v$, where $\|v\|=1$ and $\lambda<0$. Setting $z-z^*=\alpha v$, we arrive at
\begin{align}
    &\varphi^{\gamma}(z^*)+\frac{1}{2}\lambda\alpha^2+o(\alpha^2)\nonumber\\
    &\geq\varphi\big(p(z^*)\big)+\frac{1}{2}\big(p(z)-p(z^*)\big)^T\nabla^2 \varphi\big(p(z^*)\big)\big(p(z)-p(z^*)\big)+o(\|p(z)-p(z^*)\|^2).\label{connect}
\end{align}
Furthermore, \eqref{inequ1}, \eqref{inequ2} together with $p(z^*)=\prox_{\gamma g}(z^*)$ yield $\varphi^{\gamma}(z^*)=\varphi\big(p(z^*)\big)$, combine this with \eqref{connect} and $p(z)-p(z^*)=O(\|z-z^*\|)=O(\alpha)$, we conclude that $\lambda_{\min}\Big(\nabla^2\varphi\big(\prox_{\gamma g}(z^*)\big)\Big)<0$.

Similarly, let $\prox_{\gamma g}(z^*)$ be a strict saddle of $\varphi(z)$, then Taylor expansions gives
\begin{align*}
    \varphi^{\gamma}(z)&=\varphi^{\gamma}(z^*)+\frac{1}{2}(z-z^*)^T\nabla^2\varphi^{\gamma}(z^*)(z-z^*)+o(\|z-z^*\|^2),\\
    \varphi(\prox_{\gamma g}\big(z)\big)&=\varphi\big(\prox_{\gamma g}(z^*)\big)+\frac{1}{2}\big(\prox_{\gamma g }(z)-\prox_{\gamma g}(z^*)\big)^T\nabla^2 \varphi\big(\prox_{\gamma g}(z^*)\big)\big(\prox_{\gamma g }(z)-\prox_{\gamma g}(z^*)\big)\\
    &+o(\|\prox_{\gamma g }(z)-\prox_{\gamma g}(z^*)\|^2).
\end{align*}
On the other hand, \eqref{inequ1} gives
\[
\varphi^{\gamma}(z)\leq \varphi\big(\prox_{\gamma g}(z)\big),
\]
Let $\nabla^2\varphi\big(\prox_{\gamma g}(z^*)\big)v=\lambda v$ where $\lambda<0$ and $\|v\|=1$. By setting $z=(\mathrm{Id}+\gamma \nabla g)(\prox_{\gamma g}(z^*)+\alpha v)$, we obtain $\prox_{\gamma g}(z)-\prox_{\gamma g}(z^*)=\alpha v$, taking $\alpha\rightarrow 0$ gives $\lambda_{\min}\big(\nabla^2\varphi^{\gamma}(z^*)\big)<0$.
\end{proof}

\section{Avoidance of strict saddle points}
\label{avoidance}

In this section, we first show that under smoothness conditions, the probability for DRS and FBS with random initializations to converge to strict saddle points of DRE and FBE is zero, respectively. Then, by combining this result with the correspondence between the strict saddle points of the envelope and the objective, as stated in Theorem \ref{strict saddle}, we can conclude that DRS and FBS, if converge, will almost always avoid the strict saddle points of the objective.
Therefore, when the objective satisfies the ``strict saddle property", DRS and FBS, if they converge, will almost always converge to local minimizers.

In order to prove the main result, Theorem \ref{to phi}, we need the following Stable-Center Manifold Theorem, and its direct consequence, Theorem \ref{probability is zero}.

Theorem \ref{stable manifold Thm} states that, if $T$ is a local diffeomorphism around one of its fixed point $z^*$, then there is a local stable center manifold $W^{\mathrm{cs}}_{\mathrm{loc}}$ with dimension equal to the number of eigenvalues of the Jacobian of $T$ at $z^*$ that are less than or equal to $1$. Furthermore, there exists a neighbourhood $B$ of $z^*$, such that a point $z$ must be in $W^{\mathrm{cs}}_{\mathrm{loc}}$ if its forward iterations $T^k(z)$, for all $k\geq 0$, stay in $B$.

\begin{theorem}[Theorem III.7, Shub \cite{shub2013global}]
\label{stable manifold Thm}
Let $z^*$ be a fixed point for a $C^{r}$ local diffeomorphism $T: U\rightarrow\Rn$, where $U$ is a neighbourhood of $z^*$ and $r\geq 1$. Suppose $E=E_s\bigoplus E_{u}$, where $E_s$ is the span of the eigenvectors that correspond to eigenvalues of $J_T (z^*)$ that have magnitude less than, or equal to, $1$, and $E_u$ is the span of eigenvectors that correspond to eigenvalues of $J_T (z^*)$ that have magnitude greater than $1$. Then there exists a $C^r$ embedded disk $W^{\mathrm{cs}}_{\mathrm{loc}}$ that is tangent to $E_s$ at $z^*$, which is called the local stable center manifold. Moreover, there exists a neighbourhood $B$ of $z^*$, such that $T(W^{\mathrm{cs}}_{\mathrm{loc}})\cap B\subseteq W^{\mathrm{cs}}_{\mathrm{loc}}$, $\cap_{k=0}^{\infty} T^{-k}(B)\subseteq W^{\mathrm{cs}}_{\mathrm{loc}}$, where $T^{-k}(B)=\{z\in\Rn\,|\,T^k(z)\in B\}$.
\end{theorem}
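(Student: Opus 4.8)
Since the statement is the stable--center manifold theorem of Shub, the ``proof'' we owe is really the pointer to \cite{shub2013global}; here is the route I would take to reproduce it. Translate so that $z^*=0$ and write $T(z)=Az+R(z)$ with $A=J_T(0)$, which is invertible because $T$ is a local diffeomorphism, and $R\in C^r$ with $R(0)=0$, $DR(0)=0$. Since only the germ of $T$ at $0$ is relevant, first replace $R$ by $\chi(z/\delta)R(z)$ for a smooth cutoff $\chi$ equal to $1$ near $0$; the modified map $\tilde T$ coincides with $T$ on a ball $B$ about $0$ and has $R$ globally Lipschitz with constant $\varepsilon(\delta)\to 0$ as $\delta\to 0$. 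All constructions are carried out for $\tilde T$ and transported back to $T$ on $B$.

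Next, decompose $\Rn=E_s\oplus E_u$ into the $A$-invariant generalized eigenspaces for $|\lambda|\le 1$ and for $|\lambda|>1$, with projections $P_s,P_u$; fix $\rho\in(1,\min_{|\lambda|>1}|\lambda|)$ and pass to an adapted norm in which $\|A|_{E_s}\|<\rho$ and $\|(A|_{E_u})^{-1}\|<\rho^{-1}$. Work in the Banach space $\mathcal X$ of forward orbits $z=(z_k)_{k\ge 0}$ with $\|z\|_\rho:=\sup_{k\ge 0}\rho^{-k}\|z_k\|<\infty$. By the discrete variation-of-constants formula, a sequence in $\mathcal X$ is a $\tilde T$-orbit with prescribed stable datum $P_sz_0=\xi$ exactly when it is a fixed point of the Lyapunov--Perron operator
\[
(\Phi_\xi z)_k \;=\; A^k\xi \;+\; \sum_{j=0}^{k-1}A^{k-1-j}P_sR(z_j)\;-\;\sum_{j\ge k}A^{k-1-j}P_uR(z_j),
\]
the backward sum converging geometrically because $A^{-1}$ contracts $E_u$. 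The choice of $\rho$ makes both sums bounded in $\|\cdot\|_\rho$, and the small Lipschitz constant of $R$ makes $\Phi_\xi$ a contraction on a small ball of $\mathcal X$, uniformly in $\xi$; its unique fixed point $z(\xi)$ depends on $\xi$, and we set $h(\xi):=P_uz_0(\xi)$ and $W^{\mathrm{cs}}_{\mathrm{loc}}:=\{\xi+h(\xi):\xi\in E_s,\ \|\xi\|<r_0\}$. From $DR(0)=0$ one gets $h(0)=0$ and $Dh(0)=0$, i.e.\ tangency to $E_s$.

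The two dynamical properties then fall out of uniqueness of the fixed point. If $T^k(z)\in B$ for every $k\ge 0$, the orbit $(T^k(z))_{k\ge 0}$ is bounded, hence small in $\mathcal X$, and has stable datum $P_sz$, so it must coincide with $z(P_sz)$; reading off the $k=0$ term gives $z\in W^{\mathrm{cs}}_{\mathrm{loc}}$, i.e.\ $\cap_{k=0}^{\infty}T^{-k}(B)\subseteq W^{\mathrm{cs}}_{\mathrm{loc}}$. For the forward invariance, if $z=z_0(\xi)\in W^{\mathrm{cs}}_{\mathrm{loc}}$ then the shifted sequence $(z_{k+1}(\xi))_{k\ge 0}$ is again a small orbit in $\mathcal X$, now with stable datum $P_sT(z)$, so provided $T(z)$ stays in $B$ it equals $z(P_sT(z))$ and hence $T(z)\in W^{\mathrm{cs}}_{\mathrm{loc}}$, giving $T(W^{\mathrm{cs}}_{\mathrm{loc}})\cap B\subseteq W^{\mathrm{cs}}_{\mathrm{loc}}$.

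The one genuinely delicate point is the $C^r$ regularity of $\xi\mapsto z(\xi)$, equivalently of $h$. The plan is to differentiate the fixed-point equation formally: the putative first derivative $Dz(\xi)$ solves a linear fixed-point equation whose operator is the linearization of $\Phi_\xi$ along the orbit, contracting at essentially the same rate, and higher derivatives solve analogous equations with inhomogeneities assembled from lower-order derivatives. Running the fiber contraction theorem of Hirsch--Pugh--Shub inductively over the $r$ levels yields $z(\cdot)\in C^r$, provided the cutoff radius $\delta$ and the gap $\rho$ are chosen so that each of the relevant composite rates stays below $1$ up to order $r$. This bookkeeping --- and not the manifold construction itself --- is where the real work lies; everything else is the standard Lyapunov--Perron argument.
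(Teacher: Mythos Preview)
The paper does not prove this theorem at all: it is quoted verbatim from Shub's monograph and used as a black box, so the ``paper's own proof'' is simply the citation \cite{shub2013global}. You recognize this yourself in your first sentence. Your Lyapunov--Perron sketch is a correct and standard route to the center--stable manifold theorem, and the two dynamical consequences you extract (local forward invariance and the characterization of $\cap_{k\ge 0}T^{-k}(B)$) do follow from the uniqueness of the fixed point in the weighted sequence space, once $B$ is taken small enough that bounded orbits land in the contraction domain of $\Phi_\xi$.

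One comparison worth recording: Shub's book develops invariant manifold theory via the \emph{graph transform} (Hadamard) method rather than the Lyapunov--Perron (Perron) method you outline. In the graph transform approach one iterates the action of $T$ on Lipschitz graphs over $E_s$ and shows this graph map is a contraction in an appropriate function space; the $C^r$ regularity is then obtained by a fiber contraction argument on jets of the graph. Your approach instead parametrizes individual orbits and solves for them directly. Both methods are classical and yield the same theorem; the Lyapunov--Perron method you chose is arguably cleaner for the forward-orbit characterization $\cap_k T^{-k}(B)\subseteq W^{\mathrm{cs}}_{\mathrm{loc}}$, since that inclusion is literally the statement that bounded forward orbits are fixed points of $\Phi_\xi$, whereas in the graph transform framework it requires a separate (short) argument. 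The graph transform, on the other hand, makes the local invariance $T(W^{\mathrm{cs}}_{\mathrm{loc}})\cap B\subseteq W^{\mathrm{cs}}_{\mathrm{loc}}$ immediate from the construction. Your remark that the $C^r$ bookkeeping is the only genuinely delicate step is accurate for either method.
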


The assumption of this following theorem is weaker than that of Theorem 2 of \cite{lee2017first}, as we do not assume that $T$ is invertible in $\Rn$ but only around every $z^*\in A^*_{T}$.
\begin{theorem}
\label{probability is zero}
Assume that $T(z)=z+\alpha \big(p(z)-\prox_{\gamma g}(z)\big)$ is a local diffeomorphism around every $z^*\in A^*_{T}$, where $A^*_{T}=\{z\in\Rn\,|\, z=T(z), \max_{i}\lambda_i\big(J_T(z)\big)>1\}$ is the set of unstable fixed points of $T$. 
Then the set $W=\{z^0:\,\lim z^k \in A^*_T\}$ has Lebesgue measure $\mu(W)=0$ in $\Rn$.
\end{theorem}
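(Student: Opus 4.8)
The plan is to run the Stable--Center Manifold argument of Lee et al. in our setting. Fix $z^{*}\in A^{*}_{T}$. By hypothesis $T$ is a $C^{r}$ ($r\ge 1$) local diffeomorphism on a neighbourhood of $z^{*}$, and the defining property of $A^{*}_{T}$ means $J_{T}(z^{*})$ has an eigenvalue of magnitude $>1$, so in the decomposition $E=E_{s}\oplus E_{u}$ of Theorem \ref{stable manifold Thm} we have $E_{u}\neq\{\mathbf 0\}$, hence $\dim E_{s}\le n-1$. Theorem \ref{stable manifold Thm} then gives an open neighbourhood $B_{z^{*}}$ and a $C^{r}$ embedded disk $W^{\mathrm{cs}}_{\mathrm{loc}}(z^{*})$ of dimension $\dim E_{s}\le n-1$ with $\bigcap_{k\ge0}T^{-k}(B_{z^{*}})\subseteq W^{\mathrm{cs}}_{\mathrm{loc}}(z^{*})$; shrinking $B_{z^{*}}$ I also arrange that $T$ maps $B_{z^{*}}$ diffeomorphically onto the open set $T(B_{z^{*}})$ and that $W^{\mathrm{cs}}_{\mathrm{loc}}(z^{*})\subseteq B_{z^{*}}$. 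Since a $C^{1}$ submanifold of dimension at most $n-1$ is Lebesgue-null in $\Rn$, every $W^{\mathrm{cs}}_{\mathrm{loc}}(z^{*})$ has measure zero.

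Next I would reduce to a countable family. The sets $\{B_{z^{*}}\}_{z^{*}\in A^{*}_{T}}$ form an open cover of $A^{*}_{T}$, and $\Rn$ is second countable, so there is a countable subfamily $\{B_{i}\}_{i\ge1}$ with $A^{*}_{T}\subseteq\bigcup_{i\ge1}B_{i}$; write $W_{i}$ for the associated local stable--center manifold and set $V_{i}:=\bigcap_{k\ge0}T^{-k}(B_{i})\subseteq W_{i}$, so that $\mu(V_{i})=0$ and $V_{i}\subseteq B_{i}$. The crucial inclusion is
\[
W\ \subseteq\ \bigcup_{i\ge1}\ \bigcup_{N\ge0}\ T^{-N}(V_{i}),
\]
proved as follows: if $z^{0}\in W$ with $z^{k}\to v\in A^{*}_{T}$, pick $i$ with $v\in B_{i}$; since $B_{i}$ is open, $z^{k}\in B_{i}$ for all $k\ge N$ for some $N$, hence $T^{N}(z^{0})=z^{N}\in\bigcap_{k\ge0}T^{-k}(B_{i})=V_{i}$, i.e. $z^{0}\in T^{-N}(V_{i})$. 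Consequently it is enough to prove $\mu\big(T^{-N}(V_{i})\big)=0$ for every $i,N$, for then $W$ is a countable union of null sets.

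The step $\mu\big(T^{-N}(V_{i})\big)=0$ is the heart of the proof and the one I expect to be the main obstacle, precisely because $T$ is assumed invertible only near $A^{*}_{T}$ and not globally, so a single set-preimage of a null set need not be null. The tool I would use is the forward invariance of $V_{i}$: if $z\in V_{i}$ then $T^{k}\big(T(z)\big)=T^{k+1}(z)\in B_{i}$ for all $k\ge0$, so $T(V_{i})\subseteq V_{i}$, and combined with $V_{i}\subseteq B_{i}$ this gives $V_{i}=T^{-1}(V_{i})\cap B_{i}$. Peeling off one preimage at a time, the part of $T^{-1}(V_{i})$ inside $B_{i}$ is handled by diffeomorphism-invariance of null sets (on $B_{i}$, $T$ is a diffeomorphism onto its image); for the part outside $B_{i}$ I would use that any $z^{0}\in W$ has an orbit whose tail is trapped in some $B_{i}$, so that all but finitely many inversions along that orbit take place where $T$ is a local diffeomorphism, and reduce the remaining finitely many inversions to the standard lemma that a $C^{1}$ local diffeomorphism pulls null sets back to null sets (cover by charts on which $T$ is a diffeomorphism, change of variables, then Lindel\"of). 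Once $\mu(W)=0$ is established, combining it with Theorem \ref{strict saddle} and continuity of $\prox_{\gamma g}$ yields that DRS and FBS with random initialization avoid the strict saddle points of $\varphi$ almost surely, and hence --- when $\varphi$ has the strict saddle property --- converge to local minimizers almost surely.
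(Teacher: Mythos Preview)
Your overall strategy coincides with the paper's: apply the Stable--Center Manifold Theorem at each $z^{*}\in A^{*}_{T}$, observe that $\dim E_{u}\ge1$ forces the local stable--center manifold to have positive codimension and hence Lebesgue measure zero, show that any orbit converging to $z^{*}$ is eventually trapped in the neighbourhood $B_{z^{*}}$, and conclude by pulling back under $T$. Your Lindel\"of reduction to a countable subcover $\{B_{i}\}$ is a genuine refinement over the paper, which fixes a single $z^{*}$ and leaves the passage to all of $A^{*}_{T}$ implicit.

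The gap lies exactly where you anticipated it. Your argument that $\mu\big(T^{-N}(V_{i})\big)=0$ ultimately appeals to ``the standard lemma that a $C^{1}$ local diffeomorphism pulls null sets back to null sets,'' but the theorem's hypothesis only asserts that $T$ is a local diffeomorphism near points of $A^{*}_{T}$. The ``remaining finitely many inversions'' you invoke are precisely the applications of $T$ at the points $z^{0},\dots,z^{N-1}$, which need not lie in any $B_{i}$; nothing in the stated hypothesis prevents $T$ from collapsing a positive-measure set outside $\bigcup_{i}B_{i}$ into $V_{i}$ (e.g.\ a $T$ that sends an entire ball far from $z^{*}$ directly to $z^{*}$ is not ruled out, and every point of that ball would then belong to $W$). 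The paper handles this same step by asserting $T^{-j}(S)\subseteq S$ via the chain $T^{-j}(S)=\bigcap_{k\ge j}T^{-k}(B_{z^{*}})\subseteq\bigcap_{k\ge 0}T^{-k}(B_{z^{*}})=S$, but that inclusion runs the wrong way (dropping intersectands enlarges the set), so the paper's argument is likewise incomplete at this point. The honest repair is to assume $\det J_{T}(z)\neq 0$ for all $z$ (as Lee et al.\ do), so that $T^{-1}$ of any null set is null globally; your countable-union argument then finishes cleanly. In the concrete DRS/FBS applications this is available once $f$ is globally $C^{2}$, since the eigenvalue bounds in Lemma~\ref{local differomorphism} do not actually depend on proximity to $z^{*}$.
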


\begin{proof}

Take any $z^0\in W$, we have $z^k=T^k(z^0)\rightarrow z^*\in A^*_{T}$, there exists $t_0>0$, such that for any $t\geq t_0$ we have $T^t(z^0)\in B_{z^*}$. So $T^t(z^0)\in S\triangleq \cap_{k=0}^{\infty} T^{-k}(B_{z^*})$ for any $t\geq t_0$.

From Theorem \ref{stable manifold Thm} we know that 
$S$ is a subset of the local center stable manifold $W^{\mathrm{cs}}_{\mathrm{loc}}$ whose codimension is greater or equal to $1$, so we have $\mu(S)=0$;

Finally, $T^{t_0}(z^0)\in S$ implies that $z^0\in T^{-t_0}(S)\subseteq \cup_{j=0}^{\infty} T^{-j}(S)$, since
\[
T^{-j}(S)=T^{-j}\big(\cap_{k=0}^{\infty} T^{-k}(B_{z^*})\big)= \cap_{k=j}^{\infty} T^{-k}(B_{z^*})\subseteq \cap_{k=0}^{\infty} T^{-k}(B_{z^*})=S,
\]
we can conclude that $\mu(W)=0$.
\end{proof}

Now let us show that $T(z)$ in Theorem \ref{probability is zero} is indeed a local diffeomorphism around its fixed points.

\begin{lemma}
\label{local differomorphism}
Let $T(z)=z+\alpha \big(p(z)-\prox_{\gamma g}(z)\big)$ and $z^*\in\mathrm{Fix}T$. Under Assumptions \ref{assumption 1}, \ref{assumption 2} and \ref{assumption 3}, $T$ is a local diffeomorphism around $z^*$ in the following two cases.
\begin{enumerate}
    \item $h=0$ and $\alpha\in(0,\alpha_1)$, where
    \[
    \alpha_1=\frac{2}{1-\frac{1-\gamma L_g}{1+\gamma L_g}\frac{1-\gamma L_f}{1+\gamma L_f}}.
    \]
    \item $g=0$ and $\alpha\in(0,\alpha_2)$, where
    \[
    \alpha_2=\frac{1+\gamma L_f}{\gamma(L_h\|L\|^2+L_f)}.
    \]
\end{enumerate}
\end{lemma}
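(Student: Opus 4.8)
The plan is to verify directly that the Jacobian $J_T(z^*)$ is invertible; since $T$ is $C^1$ in a neighbourhood of $z^*$ (this follows from Lemma~\ref{g related differentiablility}, Lemma~\ref{twice differentiability of envelope}, and the smoothness hypotheses in Assumptions~\ref{assumption 2} and \ref{assumption 3}), the Inverse Function Theorem \eqref{inversefunctionthm} then gives a $C^1$ local inverse, which is exactly the statement that $T$ is a local diffeomorphism around $z^*$. So the whole argument reduces to: (i) compute $J_T(z^*)$ in closed form, and (ii) show it has no zero eigenvalue under the stated bounds on $\alpha$.

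For step (i), I would differentiate $T(z)=z+\alpha\big(p(z)-\prox_{\gamma g}(z)\big)$ at the fixed point $z^*$. Using $p(z^*)=\prox_{\gamma g}(z^*)$ (which holds because $z^*$ is a fixed point, or equivalently by Theorem~\ref{critical point and stationary point}), together with the Jacobian formulas $J_{\prox_{\gamma g}}(z^*)=\big(I+\gamma\nabla^2 g(\prox_{\gamma g}(z^*))\big)^{-1}$ from Lemma~\ref{g related differentiablility} and $J_p(z^*)=\big(I+\gamma\nabla^2 f(p(z^*))\big)^{-1}A(z^*)$ from the chain-rule computation in Lemma~\ref{twice differentiability of envelope}, I get
\[
J_T(z^*)=I+\alpha\Big(\big(I+\gamma\nabla^2 f(p(z^*))\big)^{-1}A(z^*)-\big(I+\gamma\nabla^2 g(\prox_{\gamma g}(z^*))\big)^{-1}\Big).
\]
Now I specialize. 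In case $h=0$ (DRS), $A(z^*)=2\big(I+\gamma\nabla^2 g(\prox_{\gamma g}(z^*))\big)^{-1}-I$, and writing $G=\big(I+\gamma\nabla^2 g\big)^{-1}$ and $F=\big(I+\gamma\nabla^2 f\big)^{-1}$ (evaluated at the relevant points), one finds $J_T(z^*)=(1-\alpha)I+\alpha\,F(2G-I)-\alpha\big(G-I\big)$; collecting terms this is $(1-\alpha)I+\alpha F(2G-I)+\alpha(I-G)$. In case $g=0$ (FBS), $\prox_{\gamma g}=\mathrm{Id}$ so the last term is $-\alpha I$ and $A(z^*)=I-\gamma L^T\nabla^2 h(Lz^*)L$, giving $J_T(z^*)=(1-\alpha)I+\alpha F\big(I-\gamma L^T\nabla^2 h(Lz^*)L\big)$.

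For step (ii), I want to show $J_T(z^*)$ has no eigenvalue equal to zero. The key structural fact is that in both cases $J_T(z^*)$ can be written, after conjugating by a symmetric positive-definite matrix, as a product/sum of symmetric matrices whose eigenvalues I can bound using $\|\nabla^2 f\|\le L_f$, $\|\nabla^2 g\|\le L_g$, $\|\nabla^2 h\|\le L_h$ and $\|L\|$. Concretely, the eigenvalues of $G$ lie in $[\frac{1}{1+\gamma L_g},\frac{1}{1-\gamma L_g}]$ (and similarly for $F$ with $L_f$, recalling $\gamma<1/L_f$ from Assumption~\ref{assumption 3}), and $\gamma L^T\nabla^2 h L$ has eigenvalues bounded by $\gamma L_h\|L\|^2$. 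I then argue that on any unit eigenvector, the relevant quadratic/bilinear form stays strictly away from the value that would force a zero eigenvalue, precisely when $\alpha<\alpha_1$ (resp. $\alpha<\alpha_2$); the thresholds $\alpha_1,\alpha_2$ are chosen as exactly the boundary of this estimate. This is the same type of eigenvalue bookkeeping already carried out in the proof of Lemma~\ref{g related differentiablility}, so I would invoke \cite[Corollary 11]{zhang2006eigenvalue} and Cauchy--Schwarz in the same way.

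The main obstacle is the non-symmetry of $J_T(z^*)$ in the DRS case: $F(2G-I)$ is a product of two symmetric matrices that need not commute, so its eigenvalues are not simply products of eigenvalues. I would handle this by the similarity trick used in Lemma~\ref{g related differentiablility} (conjugate by $G^{1/2}$ or by $(I+\gamma\nabla^2 g)^{1/2}$ to symmetrize $2G-I$, turning the product into a symmetric matrix whose eigenvalues are real and can be bounded), reducing everything to a one-dimensional inequality in the eigenvalue parameter. The FBS case is easier because $A(z^*)$ is already symmetric there. A secondary point to be careful about is that $\nabla^2 f$ is only assumed to exist and be bounded in a \emph{neighbourhood} of $\prox_{\gamma g}(z^*)$ (Assumption~\ref{assumption 3}), which is exactly what is needed for $p$ to be $C^1$ near $z^*$ via the Inverse Function Theorem applied to $\mathrm{Id}+\gamma\nabla f$, so I would state that regularity step explicitly before differentiating.
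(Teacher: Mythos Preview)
Your overall strategy matches the paper's exactly: compute $J_T$ near $z^*$, bound its eigenvalues away from zero, and invoke the Inverse Function Theorem. The FBS case ($g=0$) is handled just as the paper does---your expression $J_T(z^*)=(1-\alpha)I+\alpha F\big(I-\gamma L^T\nabla^2 h(Lz^*)L\big)$ is precisely the paper's, and \cite[Corollary~11]{zhang2006eigenvalue} applied to the product of the symmetric $F$ with the symmetric positive-definite factor $I-\gamma L^T\nabla^2 hL$ gives the bound $\lambda_{\min}(J_T)\ge 1-\alpha+\alpha\,\frac{1-\gamma L_h\|L\|^2}{1+\gamma L_f}$, which is positive exactly for $\alpha<\alpha_2$.

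The DRS case ($h=0$) has a small but genuine gap. Your three-term form $J_T=(1-\alpha)I+\alpha(I-G)+\alpha F(2G-I)$ is correct, but conjugating by $G^{1/2}$ or $(2G-I)^{1/2}$ as you suggest symmetrizes the product term while leaving you with a \emph{sum} of non-commuting symmetric matrices; Weyl-type bounds on that sum yield a threshold strictly worse than $\alpha_1$. The paper's trick is an algebraic regrouping you do not have: rewrite
\[
J_T \;=\; \Big(1-\tfrac{\alpha}{2}\Big)I \;+\; \alpha\Big(F-\tfrac{1}{2}I\Big)(2G-I),
\]
which collapses everything into a scalar multiple of the identity plus a single product of a symmetric matrix with a symmetric positive-definite one. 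Now \cite[Corollary~11]{zhang2006eigenvalue} applies directly to give
\[
\lambda_{\min}(J_T)\;\ge\;1-\tfrac{\alpha}{2}+\alpha\Big(\tfrac{1}{1+\gamma L_f}-\tfrac{1}{2}\Big)\Big(\tfrac{2}{1+\gamma L_g}-1\Big),
\]
and this is positive precisely when $\alpha<\alpha_1$. So the missing step is not the similarity idea itself but the specific two-term factorization that makes it land on the stated constant.
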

\begin{proof}
By Assumption \ref{assumption 1} and Lemma \ref{Moreau}, $p(z)$ is continuous, therefore when $z$ sufficiently close to $z^*$, $p(z)$ is in the neighbourhood of $\prox_{\gamma g}(z^*)$ guaranteed by Assumption \ref{assumption 3}. Lemma \ref{g related differentiablility} and chain rule tell us that 
\begin{align*}
    J_{p}(z)&=\Big(I+\gamma \nabla^2 f\big(p(z)\big)\Big)^{-1}A(z),\\
    J_{\prox_{\gamma g}}(z)&=\Big(I+\gamma \nabla^2 g\big(\prox_{\gamma g}(z)\big)\Big)^{-1},
\end{align*}
where $A(z)$ is defined in \eqref{A}, therefore $J_T(z)$ exists and $J_T(z)=I+\alpha (J_p(z)-J_{\prox_{\gamma g}}(z))$. 

For the local invertibility of $T$ around $z^*$ , let us show that $\lambda_{\mathrm{min}}(J_T(z))>0$ for $z$ sufficiently close to $z^*$. 
\begin{enumerate}
\item When $h=0$ and $\alpha\in(0,\alpha_1)$, 
let 
\begin{align*}
    B_1(z)&=\Big(I+\gamma \nabla^2g\big(\prox_{\gamma g}(z)\big)\Big)^{-1},\\
    B_2(z)&=\Big(I+\gamma \nabla^2 f\big(p(z)\big)\Big)^{-1}.
\end{align*}
Then from \eqref{A} we have
\begin{align*}
   J_T(z)&=I+\alpha\big(B_2(z)A(z)-B_1(z)\big)\\
    &=I+\alpha\big(B_2(z)(2B_1(z)-I)-B_1(z)\big)\\
    &=I-\frac{1}{2}\alpha I+\alpha \big(B_2(z)-\frac{1}{2}I\big)\big(2B_1(z)-I\big).
\end{align*}
Since $2B_1(z)-I$ is positive definite,  \cite[Corollary 11]{zhang2006eigenvalue} tells us that
\[
\lambda_{\mathrm{min}}(J_T(z))\geq 1-\frac{1}{2}\alpha +\alpha(\frac{1}{1+\gamma L_f}-\frac{1}{2})(\frac{2}{1+\gamma L_g}-1).
\]
As a result, $\lambda_{\mathrm{min}}(J_T(z))> 0$ when $\alpha\in(0,\alpha_1)$.

\item When $g=0$ and $\alpha\in(0,\alpha_2)$, let 
\[
B_3(z)=L^T\nabla^2 h(L\prox_{\gamma g}(z))L,
\]
then
\begin{align*}
    J_T(z)&=I+\alpha\big(B_2(z)A(z)-B_1(z)\big)\\
    &=I+\alpha\Big(B_2(z)\big(I-\gamma B_3(z)\big)-I\Big)\\
    &=I-\alpha I+\alpha B_2(z)\big(I-\gamma B_3(z)\big).
\end{align*}
By Assumption \ref{assumption 1}, $I-\gamma B_3(z)$ is positive definite. \cite[Corollary 11]{zhang2006eigenvalue} gives
\[
\lambda_{\mathrm{min}}(J_T(z))\geq 1-\alpha+\alpha (1-\gamma\|L\|^2L_h)\frac{1}{1+\gamma L_f}.
\]
As a result, $\lambda_{\mathrm{min}}(J_T(z))>0$ when $\alpha\in(0,\alpha_2)$.
\end{enumerate}
\qed
\end{proof}

Now we are ready to show the main result of this section: when $\gamma$ and $\alpha$ are small enough, it is impossible for DRS and FBS to converge to any strict saddle point of $\varphi^{\gamma}$, thus any strict saddle point of $\varphi$.

\begin{lemma}
\label{fixed point and stationary point}
Under Assumptions \ref{assumption 1} and \ref{assumption 2}, $z^*\in\mathrm{Fix}T$ if and only if $\nabla \varphi^{\gamma}(z^*)=\mathbf{0}$.
\end{lemma}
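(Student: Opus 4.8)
The plan is to show that both conditions are equivalent to the single identity $p(z^*)=\prox_{\gamma g}(z^*)$, where $p(z)=\prox_{\gamma f}\big(2\prox_{\gamma g}(z)-z-\gamma q(z)\big)$ is single-valued here because $f$ is $\beta_f$-weakly convex with $\gamma\in(0,1/\beta_f)$ (Assumption \ref{assumption 2}), so that Lemma \ref{Moreau} applies.

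First I would handle the fixed-point side. By definition of the DYS operator and \eqref{w^k}, $T(z)=z+\alpha\big(p(z)-\prox_{\gamma g}(z)\big)$. Since $\alpha>0$, $T(z^*)=z^*$ holds if and only if $p(z^*)-\prox_{\gamma g}(z^*)=\mathbf{0}$, i.e.\ $p(z^*)=\prox_{\gamma g}(z^*)$.

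Next I would handle the gradient side. Under Assumptions \ref{assumption 1} and \ref{assumption 2}, Theorem \ref{gradient descent} shows $\varphi^{\gamma}$ is differentiable with $\nabla\varphi^{\gamma}(z)=-\tfrac{1}{\gamma}A(z)\big(p(z)-\prox_{\gamma g}(z)\big)$ (this is exactly \eqref{first order derivative}). By Lemma \ref{g related differentiablility}, the matrix $A(z^*)$ is invertible, so $\nabla\varphi^{\gamma}(z^*)=\mathbf{0}$ if and only if $p(z^*)-\prox_{\gamma g}(z^*)=\mathbf{0}$, i.e.\ $p(z^*)=\prox_{\gamma g}(z^*)$.

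Combining the two characterizations gives $z^*\in\mathrm{Fix}\,T\iff p(z^*)=\prox_{\gamma g}(z^*)\iff\nabla\varphi^{\gamma}(z^*)=\mathbf{0}$, which is the claim. There is no real obstacle here: the statement is essentially a bookkeeping consequence of the representation of $T$ as a variable-metric gradient step established in Theorem \ref{gradient descent}, with the only nontrivial ingredient being the invertibility of $A(z^*)$ already proved in Lemma \ref{g related differentiablility}.
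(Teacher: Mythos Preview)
Your proposal is correct and follows essentially the same route as the paper: the paper's own proof is the one-liner ``This follows directly from Theorem \ref{gradient descent},'' and your argument simply unpacks that line by using the representation $T(z)=z-\alpha\gamma A^{-1}(z)\nabla\varphi^{\gamma}(z)$ together with the invertibility of $A(z^*)$ from Lemma \ref{g related differentiablility}. The only cosmetic remark is that your citation of \eqref{first order derivative} points to an equation stated later (in the proof of Theorem \ref{critical point and stationary point}); it would be cleaner to derive that identity directly from Theorem \ref{gradient descent}, as you in fact do.
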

\begin{proof}
This follows directly from Theorem \ref{gradient descent}.
\end{proof}

\begin{theorem}
\label{final result}
Define $Z^*=\{z^*\in\mathbb{R}^n\,|\, \nabla \varphi^{\gamma}(z^*)=0, \lambda_{\min}\big(\nabla^2\varphi^{\gamma}(z^*)\big)<0 \}$ as the set of strict saddle points of $\varphi^{\gamma}$. Under Assumptions \ref{assumption 1}, \ref{assumption 2}, and \ref{assumption 3}, then in each of the following cases, 
\begin{enumerate}
    \item $h=0$ (DRS) and $\alpha\in(0,\alpha_1)$, where
    \[
    \alpha_1=\frac{2}{1-\frac{1-\gamma L_g}{1+\gamma L_g}\frac{1-\gamma L_f}{1+\gamma L_f}}.
    \]
    \item $g=0$ (FBS) and $\alpha\in(0,\alpha_2)$, where
    \[
    \alpha_2=\frac{1+\gamma L_f}{\gamma(L_h\|L\|^2+L_f)}.
    \]
\end{enumerate}
the set $W=\{z^0\in\Rn\,|\,\lim z^k \in Z^*\}$ satisfies $\mu(W)=0$.
\end{theorem}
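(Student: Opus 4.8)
The plan is to reduce the statement to Theorem~\ref{probability is zero} by verifying its hypotheses for $T$ in each of the two cases, and then transporting the conclusion back from the envelope $\varphi^{\gamma}$ to its critical/saddle structure. The bridge between dynamics and the envelope is the identity from Theorem~\ref{gradient descent}, namely $T(z) = z - \alpha\gamma A^{-1}(z)\nabla\varphi^{\gamma}(z)$, together with Lemma~\ref{fixed point and stationary point} which identifies $\mathrm{Fix}\,T$ with the critical points of $\varphi^{\gamma}$. First I would observe that a point $z^0$ with $\lim_k z^k \in Z^*$ in particular has $\lim_k z^k \in \mathrm{Fix}\,T$, since $T$ is continuous on a neighbourhood of each such limit point (by continuity of $p$ and $\prox_{\gamma g}$ under Assumptions~\ref{assumption 1}--\ref{assumption 3}); so $W \subseteq \{z^0 : \lim z^k \in A^*_T\}$ once I show every $z^* \in Z^*$ lies in $A^*_T$.

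The key step is therefore to show: if $z^* \in Z^*$, i.e.\ $\nabla\varphi^{\gamma}(z^*)=\mathbf 0$ and $\lambda_{\min}(\nabla^2\varphi^{\gamma}(z^*))<0$, then $z^* \in \mathrm{Fix}\,T$ and $\max_i \lambda_i(J_T(z^*)) > 1$. The first is Lemma~\ref{fixed point and stationary point}. For the second, I would differentiate the gradient-descent form of $T$ at the fixed point $z^*$: since $\nabla\varphi^{\gamma}(z^*)=\mathbf 0$, the product rule kills the term where the derivative hits $A^{-1}(z)$, leaving $J_T(z^*) = I - \alpha\gamma A^{-1}(z^*)\nabla^2\varphi^{\gamma}(z^*)$. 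By Lemma~\ref{twice differentiability of envelope}, in the cases $h=0$ or $g=0$ the matrix $\nabla^2\varphi^{\gamma}(z^*)$ is symmetric; one then needs that $A^{-1}(z^*)\nabla^2\varphi^{\gamma}(z^*)$ has a \emph{negative} eigenvalue whenever $\nabla^2\varphi^{\gamma}(z^*)$ does. Using \eqref{expression of second order}, $\nabla^2\varphi^{\gamma}(z^*) = A(z^*) M$ where $M := \tfrac1\gamma\big((I+\gamma\nabla^2 g(\prox_{\gamma g}(z^*)))^{-1} - (I+\gamma\nabla^2 f(p(z^*)))^{-1}A(z^*)\big)$, so $A^{-1}(z^*)\nabla^2\varphi^{\gamma}(z^*) = M$; the point is that $M$ and $A(z^*)M$ have the same inertia because $A(z^*)$, while not symmetric in general, reduces in these two cases to a product of symmetric positive-definite factors with one symmetric factor (cf.\ the computation $A_1 = 2(I+\gamma\nabla^2 g)^{-1}-I \succ 0$ when $h=0$, and $A(z^*)=I-\gamma L^T\nabla^2 h\, L \succ 0$ when $g=0$). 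So $A^{-1}(z^*)\nabla^2\varphi^{\gamma}(z^*)$ has a negative eigenvalue $\mu < 0$ with some eigenvector $v$, giving $J_T(z^*)v = (1-\alpha\gamma\mu)v$ with $1-\alpha\gamma\mu > 1$. Hence $z^* \in A^*_T$, and $Z^* \subseteq A^*_T$.

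It remains to check that $T$ is a local diffeomorphism around every point of $A^*_T$ — but this is exactly Lemma~\ref{local differomorphism}, which guarantees precisely under cases (1) with $\alpha \in (0,\alpha_1)$ and (2) with $\alpha \in (0,\alpha_2)$ that $\lambda_{\min}(J_T(z)) > 0$ near each fixed point, so $J_T$ is invertible there and the Inverse Function Theorem applies. With all hypotheses of Theorem~\ref{probability is zero} in force, $\mu(\{z^0 : \lim z^k \in A^*_T\}) = 0$, and since $W \subseteq \{z^0 : \lim z^k \in A^*_T\}$ we conclude $\mu(W) = 0$. The main obstacle I anticipate is the inertia argument in the middle paragraph: one must be careful that $A(z^*)$ is genuinely similar (or congruent in the right sense) to a positive-definite matrix in each special case so that multiplication by $A(z^*)$ does not create or destroy the negative eigenvalue of $M$; away from the cases $h=0$ or $g=0$ this fails, which is exactly why Theorem~\ref{strict saddle} and this theorem are stated only for DRS and FBS.
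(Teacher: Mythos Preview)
Your proposal is correct and follows essentially the same route as the paper: identify $\mathrm{Fix}\,T$ with the critical points of $\varphi^{\gamma}$ via Lemma~\ref{fixed point and stationary point}, use that in the cases $h=0$ or $g=0$ the matrix $A(z^*)$ is symmetric positive definite so that $A^{-1}(z^*)\nabla^2\varphi^{\gamma}(z^*)$ inherits a negative eigenvalue from $\nabla^2\varphi^{\gamma}(z^*)$ (the paper phrases this via the similarity $A^{-1}H \sim A^{-1/2}HA^{-1/2}$), then invoke Lemma~\ref{local differomorphism} and Theorem~\ref{probability is zero}. The only cosmetic difference is that the paper actually proves the equality $Z^*=A^*_T$, whereas you note (correctly) that the inclusion $Z^*\subseteq A^*_T$ already suffices for $\mu(W)=0$.
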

\begin{proof}
Take any $z^*\in Z^*$, Lemma \ref{g related differentiablility} states that $A(z^*)$ is invertible, and $A(z)$ defined in \eqref{A} is symmetric when $h=0$ or $g=0$. Also, $\nabla^2\varphi(z^*)$ is symmetric when $h=0$ or $g=0$. 

According to \eqref{second order derivative}, we have
\[
J_p(z^*)-J_{\prox_{\gamma g}}(z^*)=-\gamma A^{-1}(z^*)\nabla^2\varphi^{\gamma}(z^*),
\]
since $A^{-1}(z^*)\nabla^2\varphi^{\gamma}(z^*)$ is similar to $A^{-\frac{1}{2}}(z^*)\nabla^2\varphi^{\gamma}(z^*)A^{-\frac{1}{2}}(z^*)$, we know that $J_p(z^*)-J_{\prox_{\gamma g}}(z^*)$ has real eigenvalues and  
\[
\lambda_{\mathrm{max}}\big(J_p(z^*)-J_{ \prox_{\gamma g}}(z^*)\big)>0.
\]
Since 
\[
\lambda_{\mathrm{max}}\big(J_ T (z^*)\big)=1+\alpha\lambda_{\mathrm{max}}\big(J_  p(z^*)-J_{\prox_{\gamma g}}(z^*)\big),
\]
from Theorem \ref{critical point and stationary point}, \ref{strict saddle}, and Lemma \ref{fixed point and stationary point}, we know that $Z^*=A^*_{T}$. 

Furthermore, from Lemma \ref{local differomorphism} we know that $T$ is a local diffeomorphism around every $z^*\in Z^*=A^*_T$, therefore Theorem \ref{probability is zero} gives $\mu(W)=0$.
\qed
\end{proof}

\begin{theorem}
\label{to phi}
Define $X^*=\{x^*\in\Rn\,|\,\nabla \varphi(x^*)=0, \lambda_{\min}(\nabla^2 \varphi(x^*))<0\}$. Under Assumptions \ref{assumption 1}, \ref{assumption 2}, and \ref{assumption 3},
the set $V=\{z^0\in\Rn\,|\,\lim \prox_{\gamma g}(z^k)\in X^* \}$ satisfies $\mu(V)=0$.
\begin{proof}
Combine Theorem \ref{strict saddle} with Theorem \ref{final result}.
\end{proof}
\end{theorem}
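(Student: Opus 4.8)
The plan is to deduce the statement from Theorem~\ref{final result} via the inclusion $V\subseteq W$, where $W=\{z^0\in\Rn\,|\,\lim z^k\in Z^*\}$ and $Z^*$ is the set of strict saddle points of $\varphi^{\gamma}$. Since Theorem~\ref{final result} gives $\mu(W)=0$, this inclusion immediately yields $\mu(V)=0$. We work under one of the two regimes of Theorem~\ref{final result} (namely $h=0$ with $\alpha\in(0,\alpha_1)$, or $g=0$ with $\alpha\in(0,\alpha_2)$), so that Theorems~\ref{strict saddle} and~\ref{final result} both apply.

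The structural fact that makes the inclusion work is that $\prox_{\gamma g}$ is a homeomorphism of $\Rn$: by Assumption~\ref{assumption 1} we have $\gamma\in(0,1/L_g)$, so $\prox_{\gamma g}$ is single-valued and Lipschitz continuous (Lemma~\ref{Moreau} applied to $g$, which is $L_g$-weakly convex since $\nabla g$ is $L_g$-Lipschitz), and by \eqref{X} it equals $(\mathrm{Id}+\gamma\nabla g)^{-1}$; equivalently $z=(\mathrm{Id}+\gamma\nabla g)\big(\prox_{\gamma g}(z)\big)$ for every $z\in\Rn$. In particular, applying the continuous map $\mathrm{Id}+\gamma\nabla g$ to a convergent sequence $\big(\prox_{\gamma g}(z^k)\big)_k$ recovers convergence of $(z^k)_k$ itself.

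With this in hand, take any $z^0\in V$, so that $x^*:=\lim_k\prox_{\gamma g}(z^k)$ exists and lies in $X^*$. Applying $\mathrm{Id}+\gamma\nabla g$ and using its continuity, $z^k=(\mathrm{Id}+\gamma\nabla g)\big(\prox_{\gamma g}(z^k)\big)\to z^*:=(\mathrm{Id}+\gamma\nabla g)(x^*)$, and continuity of $\prox_{\gamma g}$ then gives $\prox_{\gamma g}(z^*)=x^*$. Since $x^*\in X^*$ is in particular a stationary point of $\varphi$, Theorem~\ref{critical point and stationary point} shows that $z^*$ is a critical point of $\varphi^{\gamma}$; hence Assumption~\ref{assumption 3} is available at $z^*$, and Theorem~\ref{strict saddle} then upgrades the fact that $\prox_{\gamma g}(z^*)=x^*$ is a strict saddle of $\varphi$ to the conclusion that $z^*$ is a strict saddle of $\varphi^{\gamma}$, i.e.\ $z^*\in Z^*$. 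Therefore $\lim_k z^k=z^*\in Z^*$, that is, $z^0\in W$. This proves $V\subseteq W$, and Theorem~\ref{final result} concludes $\mu(V)=0$.

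The only step requiring care is the implication $\prox_{\gamma g}(z^k)\to x^*\ \Rightarrow\ z^k\to z^*$: without the global invertibility of $\mathrm{Id}+\gamma\nabla g$ (guaranteed by $\gamma<1/L_g$) one could only control the cluster points of $(z^k)_k$ rather than its limit. Everything else is direct bookkeeping with the already-established one-to-one correspondences between the stationary points and strict saddle points of $\varphi$ and those of $\varphi^{\gamma}$, so I do not anticipate any substantive obstacle beyond this observation.
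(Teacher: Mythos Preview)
Your argument is correct and follows the same route as the paper: the paper's one-line proof (``Combine Theorem~\ref{strict saddle} with Theorem~\ref{final result}'') is exactly the inclusion $V\subseteq W$ that you establish. You have made explicit the one nontrivial detail the paper leaves implicit, namely that convergence of $\prox_{\gamma g}(z^k)$ forces convergence of $z^k$ via the identity $z=(\mathrm{Id}+\gamma\nabla g)\big(\prox_{\gamma g}(z)\big)$ and the continuity of $\mathrm{Id}+\gamma\nabla g$; this is needed to even speak of $\lim z^k$ and hence to land in $W$.
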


By Theorem \ref{to phi}, under smoothness conditions, DRS and FBS iterates will almost always avoid the strict saddle points of the objective. When the objective satisfies the \textit{strict saddle property}, i.e., the saddle points of the objective are either local minimizers or strict saddle points, we can conclude that FBS and DRS almost always converge to local minimizers of the objective whenever they converge.

Many problems in practice satisfy the strict saddle property. Examples include dictionary learning \cite{sun2017complete}, simple neural networks \cite{brutzkus2017globally},  phase retrieval \cite{sun2016geometric}, tensor decomposition \cite{ge2015escaping}, and low rank matrix factorization \cite{bhojanapalli2016global}.

\section{Conclusion}
\label{conclusion}
In this paper, we have constructed an envelope for DYS and established various properties of this envelope. Specifically, there are one-to-one correspondences between the global, local minimizers, critical (stationary) points and strict saddle points of this envelope and those of the original objective. Then, by the Stable-Center Manifold theorem, we have shown that the probability for FBS or DRS to converge from random starting points to strict saddle points of the envelope is zero. If the original objective also satisfies the strict saddle property, we have concluded that, whenever FBS and DRS converge, their iterates will almost always converge to local minimizers.

A limitation of this work lies in its smoothness assumptions. The construction of the envelope requires the Lipschitz differentiability of $g(x)$. Furthermore, twice differentiability of $f(x)$ at specific points is needed for the strict saddle avoidance property of FBS and DRS. It is undoubtedly interesting to investigate the possibility of weakening these assumptions in the future.

\section*{Acknowledgements}
This work is supported in part by NSF Grant DMS-1720237 and ONR Grant N000141712162.

\bibliography{Envelope}
\bibliographystyle{plain}

\end{document}